\definecolor{mygreen}{rgb}{0.1,0.75,0.2}
 \newtheorem{thm}{Theorem}[section]
 \newtheorem{prop}[thm]{Proposition}
\numberwithin{equation}{section}
\DeclareMathOperator{\sgn}{sgn}
\newcommand{\Lra}{\Longrightarrow}
\providecommand{\bbs}[1]{\left(#1\right)}
\newcommand{\hs}{\mathcal{H}}
\newcommand{\bR}{\mathbb{R}}
\newcommand{\la}{\langle}
\newcommand{\ra}{\rangle}
\newcommand{\pt}{\partial}
\newcommand{\ud}{\,\mathrm{d}}
\newcommand{\8}{\infty}
\newcommand{\nn}{\mathcal{N}}
\newcommand{\mx}{\mathbf{x}}
\newcommand{\my}{\mathbf{y}}
\newcommand{\divn}{\text{div}_{\nn}}
\newcommand{\iright}{\frac{\pi_{i,j}+ \pi_{i+1,j}}{2\Delta x^2 \pi_{i,j}} }
\newcommand{\ileft}{\frac{\pi_{i,j}+ \pi_{i-1,j}}{2\Delta x^2 \pi_{i,j}} }
\newcommand{\jup}{\frac{\pi_{i,j+1}+ \pi_{i,j}}{2\Delta y^2 \pi_{i,j}} }
\newcommand{\jdown}{\frac{\pi_{i,j}+ \pi_{i,j-1}}{2\Delta y^2 \pi_{i,j}} }
\newcommand{\irightn}{\frac{\pi_{i,j}+ \pi_{i+1,j}}{2\Delta x^2} }
\newcommand{\ileftn}{\frac{\pi_{i,j}+ \pi_{i-1,j}}{2\Delta x^2} }
\newcommand{\jupn}{\frac{\pi_{i,j+1}+ \pi_{i,j}}{2\Delta y^2 } }
\newcommand{\jdownn}{\frac{\pi_{i,j}+ \pi_{i,j-1}}{2\Delta y^2 } }
\begin{document}

\title{Inbetweening auto-animation via Fokker-Planck dynamics and thresholding}

\author[Y. Gao]{Yuan Gao}
\address{Department of Mathematics, Duke University, Durham, NC}
\email{yuangao@math.duke.edu}

\author[G. Jin]{Guangzhen Jin}
\address{Southern marine science and engineering Guangdong laboratory, Zhuhai, China\\
Key laboratory of marine resources and coastal engineering in Guangdong province, school of marine sciences, Sunyat‐sen University, Guangzhou, China}
\email{jingzh3@mail.sysu.edu.cn; gzjinouc@gmail.com}
\author[J.-G. Liu]{Jian-Guo Liu}
\address{Department of Mathematics and Department of
  Physics, Duke University, Durham, NC}
\email{jliu@math.duke.edu}

\begin{abstract}
We propose an equilibrium-driven deformation algorithm (EDDA) to simulate the inbetweening transformations starting from
 an initial image to an equilibrium image, which covers images varying from a greyscale type to a colorful type on plane or manifold.  The algorithm is based on Fokker-Planck dynamics on manifold,
which  automatically cooperates positivity, unconditional stability, mass conservation law, exponentially convergence and also  the manifold structure suggested by dataset.
The thresholding scheme is adapted for the sharp interface dynamics and is used to achieve the finite time convergence.
Using EDDA, three challenging examples, (I) facial aging process, (II) coronavirus disease 2019 (COVID-19) invading/treatment process, and (III) continental evolution process are conducted efficiently. 
\end{abstract}

\date{\today}
\maketitle
\section{Introduction}
Inbetweening auto-animation is 
to automatically generate animations (motions) given a starting and end images. The classical method for auto-animation use detailed kinematic equations for each object in the starting images, which is precise but time consuming due to case by case c.f. \cite{cong2015fully, zollhofer2018state}. 

Instead of analyzing the detailed kinematic equation for each object,
we aim to propose an efficient and universal algorithm for inbetweening auto-animation based on Fokker-Planck dynamics on manifold and thresholding. We call this algorithm equilibrium-driven deformation algorithm (EDDA).

EDDA regards the end image as an equilibrium state of a Fokker-Planck equation  and the inbetweening motion is driven by an underlying potential force determined by the equilibrium. This viewpoint is especially useful when the detailed physical process is not clear or hard to describe. For instance, the inbetweening motion of aging process, tumor growth,  pneumonia invading for coronavirus disease 2019 (COVID-19)  or the formation of current continents/oceans starting from pangaea.

We first consider a  Fokker-Planck equation in a flat domain $\Omega \subset \bR^\ell$ with a unique equilibrium $\pi$ and no-flux boundary condition in Section \ref{sec2} and then we propose an efficient solver for this Fokker-Planck equation in Section \ref{sec3}. The numerical solver for this part is based on structured grids and finite volume method \cite{eymard2000finite}. An unconditionally stable explicit time discretization is introduced, which automatically enjoys positivity, mass conservation law, exponentially convergence and also  efficiency.
For a Fokker-Planck equation on a closed manifold, we propose a similar efficient solver based on point clouds and the associated Voronoi tessellation in Section \ref{sec_fp_n}.  The Voronoi tessellation automatically gives the manifold information and can be used to approximate surface gradient/divergence in the Fokker-Planck equation. Based on this,  an analogue unconditionally stable explicit time discretization is introduced.

To realize the end image (the equilibrium) at finite time and the sharp dynamics of the inbetweening motion, we combine the explicit-time-discretization of the  Fokker-Planck equation with 
the thresholding dynamics. When the equilibrium image has a sharp interface, the scheme adapting thresholding step converges faster than the purely  Fokker-Planck iteration and the relative error reaches machine accuracy at finite time.

In Section \ref{sec_simu}, we apply EDDA proposed for  either structured grids on $\Omega\subset\bR^\ell$, or for point-clouds which suggests an underlining manifold to conduct three challenging and important examples: (I) facial aging process, (II) COVID-19 invading/treatment process, and (III) continental evolution process. In Example (I), inbetweening facial aging process at each time is simulated and potentially reveals the detailed changes of different part of human face over time. In Example (II), the inbetweening evolution of COVID-19 pneumonia invading before treatment and the fading away after treatment are simulated, which shows a good agreement with computerized tomography (CT)  scans and also reveals promising application in the studying of pathology  for COVID-19. In Example (III), the Fokker-Planck dynamics and thresholding are combined together to simulate the continental drifting process, which may suggest a new explanation for the formation of the current five continents of the world. From those examples from quite different research fields, EDDA are shown to be a very efficient and universal method with enormous potential applications in other fields of science and  industry.

\section{Fokker-Planck equation and equilibrium}\label{sec2}
Suppose $\Omega\subset \bR^\ell$ is a closed subset in $\bR^\ell$.   Assume the end image on $\Omega$ is described by a equilibrium density $\pi(\mx): \Omega \to \bR.$
The value of $\rho$ indicates the gray level of the image for a grayscale image. In the case of Red-Green-Blue (RGB)  image, we use three separate densities to indicate the RGB levels of the image separately.
Then with $\pi \propto e^{-\phi}$, the  Fokker-Planck equation is given by
\begin{equation}\label{FPo}
\pt_t \rho = \Delta \rho + \nabla \cdot(\rho \nabla \phi)= \nabla \cdot \bbs{\pi \nabla \bbs{\frac{\rho}{\pi}}}
\end{equation}
with initial data $\rho_0$ satisfying 
\begin{equation}
\int_{\Omega} \rho_0 \ud x = \int_{\Omega} \pi \ud x.
\end{equation}
We consider the following natural no-flux boundary condition
\begin{equation}\label{bc}
n\cdot  \nabla \bbs{\frac{\rho}{\pi}} =0 \quad \text{on }\pt \Omega.
\end{equation}
\eqref{FPo} can be recast as the relative entropy formulation
\begin{equation}
\pt_t \rho = \nabla \cdot \bbs{ \rho \nabla \ln\frac{\rho}{\pi}} .
\end{equation}

See Section \ref{sec_fp_n} for a Fokker-Planck equation  on a $d$ dimensional smooth closed submanifold of $\mathbb{R}^\ell$.

Now we state the ergodicity of Fokker-Planck equation \eqref{FPo}. Assume 
\begin{equation}
\pi>0, \quad \pi\in C^1(\bar{\Omega}).
\end{equation}
Let $L^2(\Omega;\frac{1}{\pi}\ud x)$ be the weighted Sobolev space. Define the  Fokker-Planck operator for \eqref{FPo} as $L^*: D(L^*)\subset L^2(\Omega; \frac{1}{\pi} \ud x) \to \bR$ with $D(L^*):=\{u\in H^2(\Omega ;\frac{1}{\pi}\ud x); \pt_n \frac{u}{\pi} =0 \text{ on }\pt \Omega\}$
\begin{equation}
L^* u:= -\nabla \cdot (\pi \nabla \frac{u}{\pi}).
\end{equation}
This can be regarded as the adjoint operator of the generator $L=-\frac{1}{\pi} \nabla \cdot(\pi \nabla)$ of Fokker-Planck equation \eqref{FPo}.
One can see $L^*$ is self-adjoint operator in $L^2(\Omega; \frac{1}{\pi} \ud x)$ with compact resolvent $(\lambda I +L^*)^{-1}$ for $\lambda$ large enough. Thus $L^*$ has only discrete spectrum without finite accumulation points. Furthermore, since $\pi>0$, for $\rho\in D(L)$,
\begin{equation}
L^*\rho = 0, \,\,\, \Lra \int \pi |\nabla \frac{\rho}{\pi}|^2 \ud x =0, \,\,\, \Lra \rho =c \pi.
\end{equation}  
Therefore, we conclude $0$ is the simple principal eigenvalue of $L^*$  with the corresponding eigenfunction $\pi$, which leads to the spectral gap of $L^*$ in $L^2(\Omega; \frac{1}{\pi} \ud x)$, i.e.
\begin{equation}
\la L^* u, u \ra_{\frac{1}{\pi}} \geq c \la u, u\ra_{\frac{1}{\pi}}, \quad \text{ for }u \text{ s.t. } \la u, \pi \ra_{\frac{1}{\pi}}=0.
\end{equation}
Thus due to $\int (\rho-\pi) \ud x=0$, we have the following Poincare's inequality
\begin{equation}
\int |\nabla \bbs{\frac{\rho}{\pi} -1}|^2 \pi \ud x \geq c \int \bbs{\frac{\rho}{\pi}-1}^2 \pi \ud x.
\end{equation}
Therefore,  multiplying \eqref{FPo} by $\frac{\rho}{\pi}-1$, by \eqref{bc} we have
\begin{equation}
\frac{1}{2} \frac{\ud }{\ud t} \int \frac{(\rho-\pi)^2}{\pi} \ud x = -\int \pi |\nabla \frac{\rho}{\pi}|^2 \ud x \leq -  c \int \frac{(\rho-\pi)^2}{\pi} \ud x,
\end{equation}
which gives the ergodicity that 
\begin{equation}
\|\rho(\cdot, t)-\pi\|_{L^2(\Omega; \frac{1}{\pi} \ud x)} \leq 
 e^{-2ct} \|\rho(\cdot, 0)-\pi\|_{L^2(\Omega; \frac{1}{\pi} \ud x)} .
\end{equation}

\section{EDDA based on structured grids}\label{sec3}

We present the  numerical method based on structured grids for a Fokker-Planck equation on 2D domain $\Omega:= [a,b]\times [c,d]$. Let the grid size be $\Delta x= \frac{b-a}{N}, \, \Delta y = \frac{d-c}{M}$. Define the cells as
\begin{equation}
C_{ij} = ((i-1)\Delta x, i \Delta x) \times ((j-1)\Delta y, j\Delta y), \quad i=1, \cdots, N, \,\, j=1, \cdots, M.
\end{equation}
Then the cell centers $(x_i,y_i)$ are
\begin{equation}
x_i = a+ (i-\frac12)\Delta x, \quad y_j= c+ (j-\frac12) \Delta y,  \quad i=1, \cdots, N, \,\, j=1, \cdots, M.
\end{equation}
We use $\rho_{i,j}$ to approximate the value of $\rho(x_i, y_j)$ and take $\pi_{i,j}=\pi(x_i, y_j).$ Then the continuous-time finite volume scheme
is
\begin{equation}\label{fp2d}
\begin{aligned}
\dot{\rho}_{i,j} =& \frac{1}{\Delta x ^2} \bbs{ \frac{\pi_{i,j}+ \pi_{i+1,j}}{2}\bbs{\frac{\rho_{i+1,j}}{\pi_{i+1,j}} - \frac{\rho_{i,j}}{\pi_{i,j}} } - \frac{\pi_{i-1,j} + \pi_{i,j}}{2} \bbs{  \frac{\rho_{i,j}}{\pi_{i,j}} - \frac{\rho_{i-1,j}}{\pi_{i-1,j}} }} \\
&+ \frac{1}{\Delta y ^2} \bbs{ \frac{\pi_{i,j}+ \pi_{i,j+1}}{2}\bbs{\frac{\rho_{i,j+1}}{\pi_{i,j+1}} - \frac{\rho_{i,j}}{\pi_{i,j}} } - \frac{\pi_{i,j-1} + \pi_{i,j}}{2} \bbs{  \frac{\rho_{i,j}}{\pi_{i,j}} - \frac{\rho_{i,j-1}}{\pi_{i,j-1}} }} 
\end{aligned}
\end{equation}
for $i=1, \cdots, N, \, j=1, \cdots, M$ with the no-flux boundary condition \eqref{bc}. We assume the equilibrium $\pi$ satisfies 
\begin{equation}\label{nbcpi}
\begin{aligned}
\pi_{0,j} = \pi_{1,j},\, \pi_{N+1, j}= \pi_{N,j} \quad j=1, \cdots, M,\\
\pi_{i,0} = \pi_{i,1},\, \pi_{i,M+1}= \pi_{i,M} \quad i=1, \cdots, N,\\
\end{aligned}
\end{equation}
then the no-flux boundary condition \eqref{bc} is reduced to
\begin{equation}\label{nbc}
\begin{aligned}
\rho_{0,j} = \rho_{1,j},\, \rho_{N+1, j}= \rho_{N,j} \quad j=1, \cdots, M,\\
\rho_{i,0} = \rho_{i,1},\, \rho_{i,M+1}= \rho_{i,M} \quad i=1, \cdots, N.\\
\end{aligned}
\end{equation}

Denote $\rho^k_{i,j}$ as the value of $\rho$ at $t^k=k\Delta t$ with time step size $\Delta t.$ Now we introduce an unconditionally stable explicit time discretization for \eqref{fp2d}
\begin{equation}\label{explicit_scheme}
\begin{aligned}
\frac{\rho^{k+1}_{i,j}-\rho^{k}_{i,j}}{\Delta t} =& \frac{1}{\Delta x ^2} \bbs{ \frac{\pi_{i,j}+ \pi_{i+1,j}}{2}\bbs{\frac{\rho^k_{i+1,j}}{\pi_{i+1,j}} - \frac{\rho^{k+1}_{i,j}}{\pi_{i,j}} } - \frac{\pi_{i-1,j} + \pi_{i,j}}{2} \bbs{  \frac{\rho^{k+1}_{i,j}}{\pi_{i,j}} - \frac{\rho^k_{i-1,j}}{\pi_{i-1,j}} }} \\
&+ \frac{1}{\Delta y ^2} \bbs{ \frac{\pi_{i,j}+ \pi_{i,j+1}}{2}\bbs{\frac{\rho^{k}_{i,j+1}}{\pi_{i,j+1}} - \frac{\rho^{k+1}_{i,j}}{\pi_{i,j}} } - \frac{\pi_{i,j-1} + \pi_{i,j}}{2} \bbs{  \frac{\rho^{k+1}_{i,j}}{\pi_{i,j}} - \frac{\rho^k_{i,j-1}}{\pi_{i,j-1}} }}
\end{aligned}
\end{equation}
for $i=1, \cdots, N, \, j=1, \cdots, M$ with the no-flux boundary condition \eqref{nbc}.

We now further simplify \eqref{explicit_scheme} as
\begin{equation}\label{tm38}
\begin{aligned}
&\bbs{1+\frac{{\Delta t}}{\Delta x ^2} \bbs{\frac{\pi_{i,j}+ \pi_{i+1,j}}{2\pi_{i,j}} + \frac{\pi_{i-1,j} + \pi_{i,j}}{2\pi_{i,j}} } + \frac{{\Delta t}}{\Delta y ^2}  \bbs{ \frac{\pi_{i,j}+ \pi_{i,j+1}}{2\pi_{i,j}} +  \frac{\pi_{i,j-1} + \pi_{i,j}}{2\pi_{i,j}}  } }\frac{\rho^{k+1}_{i,j}}{\pi_{i,j}}\\
=
&\frac{\rho^{k}_{i,j}}{\pi_{i,j}} + \frac{{\Delta t}}{\Delta x ^2} \bbs{ \frac{\pi_{i,j}+ \pi_{i+1,j}}{2\pi_{i,j}} \frac{\rho^k_{i+1,j}}{\pi_{i+1,j}}   + \frac{\pi_{i-1,j} + \pi_{i,j}}{2\pi_{i,j}}      \frac{\rho^k_{i-1,j}}{\pi_{i-1,j}} } \\
&\quad+ \frac{{\Delta t}}{\Delta y ^2} \bbs{ \frac{\pi_{i,j}+ \pi_{i,j+1}}{2\pi_{i,j}}\frac{\rho^{k}_{i,j+1}}{\pi_{i,j+1}}   + \frac{\pi_{i,j-1} + \pi_{i,j}}{2\pi_{i,j}}   \frac{\rho^k_{i,j-1}}{\pi_{i,j-1}} }.
\end{aligned}
\end{equation}
Define
\begin{equation}
\lambda_{i,j}:= \frac{1}{\Delta x ^2} \bbs{\frac{\pi_{i,j}+ \pi_{i+1,j}}{2\pi_{i,j}} + \frac{\pi_{i-1,j} + \pi_{i,j}}{2\pi_{i,j}} } + \frac{1}{\Delta y ^2}  \bbs{ \frac{\pi_{i,j}+ \pi_{i,j+1}}{2\pi_{i,j}} +  \frac{\pi_{i,j-1} + \pi_{i,j}}{2\pi_{i,j}}  }.
\end{equation}
Then \eqref{tm38} can be rewritten as
\begin{equation}\label{tm310}
\begin{aligned}
(1+\Delta t \lambda_{i,j}) \rho_{i,j}^{k+1} =&  \rho^{k}_{i,j} + \frac{{\Delta t}}{\Delta x ^2} \bbs{ \frac{\pi_{i,j}+ \pi_{i+1,j}}{2} \frac{\rho^k_{i+1,j}}{\pi_{i+1,j}}   + \frac{\pi_{i-1,j} + \pi_{i,j}}{2}      \frac{\rho^k_{i-1,j}}{\pi_{i-1,j}} } \\
\quad&+ \frac{{\Delta t}}{\Delta y ^2} \bbs{ \frac{\pi_{i,j}+ \pi_{i,j+1}}{2}\frac{\rho^{k}_{i,j+1}}{\pi_{i,j+1}}   + \frac{\pi_{i,j-1} + \pi_{i,j}}{2}   \frac{\rho^k_{i,j-1}}{\pi_{i,j-1}} }.
\end{aligned}
\end{equation}
Denote $h:=\max\{\Delta x, \Delta y\}$. From \eqref{tm310}, we recast the scheme using a rescaled generator operator
\begin{equation}\label{Dgen}
\begin{aligned}
 \frac{\rho^{k+1}_{i,j} }{\pi_{i,j} }- \frac{\rho^{k}_{i,j} }{\pi_{i,j} } =&\frac{1}{1+\Delta t \lambda_{i,j}} \frac{{\Delta t}}{\Delta x ^2} \bbs{ \frac{\pi_{i,j}+ \pi_{i+1,j}}{2\pi_{i,j}} \frac{\rho^k_{i+1,j}}{\pi_{i+1,j}}   + \frac{\pi_{i-1,j} + \pi_{i,j}}{2\pi_{i,j}}      \frac{\rho^k_{i-1,j}}{\pi_{i-1,j}} } \\
&\quad+ \frac{1}{1+\Delta t \lambda_{i,j}} \frac{{\Delta t}}{\Delta y ^2} \bbs{ \frac{\pi_{i,j}+ \pi_{i,j+1}}{2\pi_{i,j}}\frac{\rho^{k}_{i,j+1}}{\pi_{i,j+1}}   + \frac{\pi_{i,j-1} + \pi_{i,j}}{2\pi_{i,j}}   \frac{\rho^k_{i,j-1}}{\pi_{i,j-1}} } 
- \frac{\Delta t \lambda_{i,j}}{1+\Delta t \lambda_{i,j}}\frac{\rho^{k}_{i,j} }{\pi_{i,j}}\\
=& \frac{\Delta t}{1+\Delta t \lambda_{i,j}} \Big[ \iright \bbs{\frac{\rho^k_{i+1,j}}{\pi_{i+1,j}}  - \frac{\rho^{k}_{i,j} }{\pi_{i,j}} } -  \ileft \bbs{\frac{\rho^k_{i,j}}{\pi_{i,j}}  - \frac{\rho^{k}_{i-1,j} }{\pi_{i-1,j}} } \\
&+ \jup \bbs{\frac{\rho^k_{i,j+1}}{\pi_{i,j+1}}  - \frac{\rho^{k}_{i,j} }{\pi_{i,j}} }  - \jdown \bbs{ \frac{\rho^k_{i,j}}{\pi_{i,j}}  - \frac{\rho^{k}_{i,j-1} }{\pi_{i,j-1}}  }    \Big]=: -\Delta t (L_h \frac{\rho^k}{\pi})_{i,j},
\end{aligned}
\end{equation}

Now we state the the positivity, maximal principle, mass conservation law and ergodicity of the scheme \eqref{tm38} as follows.
\begin{prop}\label{prop1}
Let $\pi_{i,j}=\pi(x_i,y_j)>0$. Let $\Delta t$ be the time step and consider the explicit scheme \eqref{explicit_scheme} for the numerical solution $\rho^k_{i,j}$ with \eqref{nbc}. Assume the initial data $\rho^0>0$ satisfies 
\begin{equation}\label{i_adj}
\sum_{i=1}^{N} \sum_{j=1}^{M} (1+\Delta t \lambda_{i,j}) \rho_{i,j}^{0} =  \sum_{i=1}^{N} \sum_{j=1}^{M}  (1+\Delta t \lambda_{i,j}) \pi_{i,j}.
\end{equation}
Then
we have
\begin{enumerate}[(i)]
\item positivity preserving property
\begin{equation}
\rho^k_{i,j}>0,\,\, i=1, \cdots, N, \, j=1, \cdots, M \quad \Lra \quad \rho^{k+1}_{i,j}>0,\,\, i=1, \cdots, N, \, j=1, \cdots, M;
\end{equation}
\item the mass-conversation law 
\begin{equation}\label{conser1_n}
\sum_{i=1}^{N} \sum_{j=1}^{M} (1+\Delta t \lambda_{i,j}) \rho_{i,j}^{k+1} =  \sum_{i=1}^{N} \sum_{j=1}^{M}  (1+\Delta t \lambda_{i,j}) \rho_{i,j}^{k}.
\end{equation}
\item  the unconditional maximal principle for $\frac{\rho_{i,j}}{\pi_{i,j}}$
\begin{equation}\label{maxP_n}
\max_{i,j} \frac{\rho^{k+1}_{i,j} }{\pi_{i,j}}\leq \max_{i,j} \frac{\rho^{k}_{i,j} }{\pi_{i,j}};
\end{equation}
\item the $\ell^\8$ contraction
\begin{equation}\label{l8semi_n}
\max_{i,j} \left| \frac{\rho^{k+1}_{i,j}}{\pi_{i,j}}-1\right| \leq \max_{i,j} \left| \frac{\rho^{k}_{i,j}}{\pi_{i,j}}-1\right| ;
\end{equation}
\item the exponential convergence
\begin{equation}\label{gap_error_n}
\left\| \frac{\rho^{k}_{i,j}}{\pi_{i,j}}-1\right\|_{\ell_F} \leq c  |\mu_2|^k, \quad |\mu_2|<1,
\end{equation}
where $\mu_2$ is the second eigenvalue of $A$ defined in \eqref{gap26}. 
\end{enumerate}
\end{prop}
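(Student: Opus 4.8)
The plan is to route all five claims through one structural observation: written in the ratio $u^k_{i,j}:=\rho^k_{i,j}/\pi_{i,j}$, the update \eqref{tm310} takes the form $u^{k+1}=\tilde A\,u^k$ with $\tilde A$ a row‑stochastic matrix (nonnegative, unit row sums) having strictly positive diagonal. To see this, divide \eqref{tm310} by $\pi_{i,j}$: the right‑hand side becomes $u^k_{i,j}$ plus a nonnegative linear combination of $u^k_{i\pm1,j},u^k_{i,j\pm1}$ whose coefficients sum to $\Delta t\lambda_{i,j}$ by the very definition of $\lambda_{i,j}$; dividing by $1+\Delta t\lambda_{i,j}$ makes the whole right‑hand side a convex combination of $u^k$‑values, carrying weight $1/(1+\Delta t\lambda_{i,j})>0$ on $u^k_{i,j}$ itself. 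At a boundary cell, \eqref{nbcpi}--\eqref{nbc} replace a ghost value such as $u^k_{0,j}$ by $u^k_{1,j}$, which merely shifts weight onto the diagonal and so keeps both properties. Granting this, (i), (iii), (iv) are immediate: $\rho^{k+1}_{i,j}=\pi_{i,j}u^{k+1}_{i,j}$ is $\pi_{i,j}>0$ times a convex combination of positive numbers; each $u^{k+1}_{i,j}$ lies between $\min_{m,n}u^k_{m,n}$ and $\max_{m,n}u^k_{m,n}$; and since $\tilde A\mathbf{1}=\mathbf{1}$, the averaging property of $\tilde A$ gives $\|u^{k+1}-\mathbf{1}\|_\infty=\|\tilde A(u^k-\mathbf{1})\|_\infty\le\|u^k-\mathbf{1}\|_\infty$, which is \eqref{l8semi_n}.

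For (ii) I would sum \eqref{tm310} over all cells and carry out discrete summation by parts in each coordinate direction. By \eqref{nbcpi} the end face‑weights degenerate, e.g.\ $\tfrac12(\pi_{0,j}+\pi_{1,j})=\pi_{1,j}$ and $\tfrac12(\pi_{N,j}+\pi_{N+1,j})=\pi_{N,j}$; after relabelling shifted indices, the interior face contributions cancel in pairs and the remaining boundary contributions collapse exactly to $\Delta t\sum_{i,j}\lambda_{i,j}\pi_{i,j}(u^k_{i,j}-u^{k+1}_{i,j})=\Delta t\sum_{i,j}\lambda_{i,j}(\rho^k_{i,j}-\rho^{k+1}_{i,j})$, which rearranges into \eqref{conser1_n}. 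In matrix terms this is the statement that $w:=\big((1+\Delta t\lambda_{i,j})\pi_{i,j}\big)_{i,j}$ is a left $1$‑eigenvector of $\tilde A$.

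The real work is (v). First I would verify that $\tilde A$ is reversible with respect to $w$, i.e.\ $w_{i,j}\tilde A_{(i,j),(i',j')}=w_{i',j'}\tilde A_{(i',j'),(i,j)}$ for neighbouring cells: both sides equal $\tfrac{\Delta t}{2\Delta x^2}(\pi_{i,j}+\pi_{i+1,j})$ in the $x$‑direction (and the $y$‑analogue), directly from the coefficients in \eqref{tm310}. Hence $W^{1/2}\tilde AW^{-1/2}$ with $W=\mathrm{diag}(w)$ is symmetric and entrywise nonnegative, so $\tilde A$ has real eigenvalues and a $w$‑orthogonal eigenbasis; by (iii)--(iv) all eigenvalues lie in $[-1,1]$, with $\mathbf{1}$ an eigenvector for the eigenvalue $1$. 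Since the structured grid is connected, $\tilde A$ is irreducible, and since its diagonal is strictly positive, it is aperiodic; thus $\tilde A$ is primitive, and Perron--Frobenius gives that $1$ is a simple eigenvalue and the unique eigenvalue of modulus $1$ (the eigenvalues of $A$ in \eqref{gap26} coincide with those of $\tilde A$, the two being conjugate via $\mathrm{diag}(\pi_{i,j})$). Writing $1=\mu_1,\mu_2,\dots$ with $|\mu_2|\ge|\mu_3|\ge\cdots$ and $|\mu_2|<1$, expand $u^0-\mathbf{1}=\sum_m c_m v_m$ in the eigenbasis; its $\mathbf{1}$‑component is $c_1=\big(\sum_{i,j}w_{i,j}u^0_{i,j}-\sum_{i,j}w_{i,j}\big)/\sum_{i,j}w_{i,j}$, which vanishes exactly by the normalisation \eqref{i_adj}. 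Therefore $u^k-\mathbf{1}=\tilde A^k(u^0-\mathbf{1})=\sum_{m\ge2}c_m\mu_m^k v_m$, and $\big\|\rho^k/\pi-1\big\|_{\ell_F}=\|u^k-\mathbf{1}\|_{\ell_F}\le c\,|\mu_2|^k$.

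The main obstacle is precisely the spectral‑gap part of (v): excluding $-1$ and every other unimodular eigenvalue, and showing $1$ is simple. Both reduce to primitivity of $\tilde A$, and one must check that the boundary folding in \eqref{nbc} neither disconnects the grid graph (needed for irreducibility) nor destroys the positive diagonal (needed for aperiodicity) — it does neither. Everything else, the convex‑combination rewriting behind (i), (iii), (iv) and the discrete integration by parts behind (ii), is routine bookkeeping.
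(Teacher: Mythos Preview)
Your plan is essentially the paper's own proof: rewrite the scheme as $u^{k+1}=\tilde A u^k$ with $\tilde A$ row-stochastic and positive on the diagonal to get (i), (iii), (iv) as convex-combination consequences; sum \eqref{tm310} with index shifts and the boundary identities \eqref{nbcpi}--\eqref{nbc} for (ii); and for (v) observe self-adjointness (your detailed-balance check) in the weighted $\ell^2$, invoke Perron--Frobenius, kill the $\mathbf 1$-component via \eqref{i_adj}, and read off geometric decay in the eigenbasis. If anything, you are more careful than the paper in spelling out primitivity (irreducibility from grid connectivity plus aperiodicity from the strictly positive diagonal), whereas the paper simply cites Perron--Frobenius.

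One small slip in your sketch of (ii): summing \eqref{tm310}, the right-hand side involves only $\rho^k$, so after index relabelling the flux terms cannot ``collapse to $\Delta t\sum\lambda_{i,j}(\rho^k_{i,j}-\rho^{k+1}_{i,j})$''; rather they reorganize into $\Delta t\sum_{i,j}\lambda_{i,j}\rho^k_{i,j}$, which added to $\sum\rho^k_{i,j}$ gives \eqref{conser1_n} directly. Your alternative phrasing---that $w_{i,j}=(1+\Delta t\lambda_{i,j})\pi_{i,j}$ is a left $1$-eigenvector of $\tilde A$---is correct and is the cleaner way to say it.
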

\begin{proof}
For (i), from \eqref{tm310}, since $\pi>0$, we know $\rho^k_{i,j}>0$ implies $\rho^{k+1}_{i,j}>0$.

To prove (ii), taking summation in \eqref{tm310}, we have the 
\begin{equation}\label{massb1}
\begin{aligned}
\sum_{i=1}^{N} \sum_{j=1}^{M}(1+\Delta t \lambda_{i,j}) \rho_{i,j}^{k+1} =& \sum_{i=1}^{N} \sum_{j=1}^{M} \rho^{k}_{i,j} + \sum_{i=1}^{N} \sum_{j=1}^{M} \frac{{\Delta t}}{\Delta x ^2} \bbs{ \frac{\pi_{i,j}+ \pi_{i+1,j}}{2} \frac{\rho^k_{i+1,j}}{\pi_{i+1,j}}   + \frac{\pi_{i-1,j} + \pi_{i,j}}{2}      \frac{\rho^k_{i-1,j}}{\pi_{i-1,j}} } \\
\quad&+ \sum_{i=1}^{N} \sum_{j=1}^{M} \frac{{\Delta t}}{\Delta y ^2} \bbs{ \frac{\pi_{i,j}+ \pi_{i,j+1}}{2}\frac{\rho^{k}_{i,j+1}}{\pi_{i,j+1}}   + \frac{\pi_{i,j-1} + \pi_{i,j}}{2}   \frac{\rho^k_{i,j-1}}{\pi_{i,j-1}} }.
\end{aligned}
\end{equation}
The second term in the RHS of \eqref{massb1}  is
\begin{equation}
\begin{aligned}
&\sum_{i=1}^{N} \sum_{j=1}^{M} \frac{{\Delta t}}{\Delta x ^2} \frac{\pi_{i,j}+ \pi_{i+1,j}}{2} \frac{\rho^k_{i+1,j}}{\pi_{i+1,j}}     =  \sum_{i=2}^{N+1} \sum_{j=1}^{M} \frac{{\Delta t}}{\Delta x ^2} \frac{\pi_{i-1,j}+ \pi_{i,j}}{2} \frac{\rho^k_{i,j}}{\pi_{i,j}}  \\
=&  \sum_{i=1}^{N} \sum_{j=1}^{M} \frac{{\Delta t}}{\Delta x ^2} \frac{\pi_{i-1,j}+ \pi_{i,j}}{2} \frac{\rho^k_{i,j}}{\pi_{i,j}} +   \frac{{\Delta t}}{\Delta x ^2} \frac{\pi_{N,j}+ \pi_{N+1,j}}{2} \frac{\rho^k_{N+1,j}}{\pi_{N+1,j}}  - \frac{{\Delta t}}{\Delta x ^2} \frac{\pi_{0,j}+ \pi_{1,j}}{2} \frac{\rho^k_{1,j}}{\pi_{1,j}} \\
=&  \sum_{i=1}^{N} \sum_{j=1}^{M} \frac{{\Delta t}}{\Delta x ^2} \frac{\pi_{i-1,j}+ \pi_{i,j}}{2} \frac{\rho^k_{i,j}}{\pi_{i,j}} +   \frac{{\Delta t}}{\Delta x ^2} \bbs{\rho^k_{N+1,j}-\rho^k_{1,j}},
\end{aligned}
\end{equation}
where we used the no-flux boundary condition \eqref{nbcpi}.
Similarly, the third term in the RHS of \eqref{massb1} is
\begin{equation}
\begin{aligned}
&\sum_{i=1}^{N} \sum_{j=1}^{M} \frac{{\Delta t}}{\Delta x ^2} \frac{\pi_{i-1,j}+ \pi_{i,j}}{2} \frac{\rho^k_{i-1,j}}{\pi_{i-1,j}}     =  \sum_{i=0}^{N-1} \sum_{j=1}^{M} \frac{{\Delta t}}{\Delta x ^2} \frac{\pi_{i+1,j}+ \pi_{i,j}}{2} \frac{\rho^k_{i,j}}{\pi_{i,j}}  \\
=&  \sum_{i=1}^{N} \sum_{j=1}^{M} \frac{{\Delta t}}{\Delta x ^2} \frac{\pi_{i+1,j}+ \pi_{i,j}}{2} \frac{\rho^k_{i,j}}{\pi_{i,j}} -   \frac{{\Delta t}}{\Delta x ^2} \frac{\pi_{N,j}+ \pi_{N+1,j}}{2} \frac{\rho^k_{N,j}}{\pi_{N,j}}  + \frac{{\Delta t}}{\Delta x ^2} \frac{\pi_{0,j}+ \pi_{1,j}}{2} \frac{\rho^k_{0,j}}{\pi_{0,j}} \\
=&  \sum_{i=1}^{N} \sum_{j=1}^{M} \frac{{\Delta t}}{\Delta x ^2} \frac{\pi_{i+1,j}+ \pi_{i,j}}{2} \frac{\rho^k_{i,j}}{\pi_{i,j}} -   \frac{{\Delta t}}{\Delta x ^2} \bbs{\rho^k_{N,j}-\rho^k_{0,j}} .
\end{aligned}
\end{equation}
One can shift index for the last two terms in the RHS of \eqref{massb1} similarly.  Therefore, using the no-flux boundary condition \eqref{nbc}, we have the mass balance
\begin{equation}
\sum_{i=1}^{N} \sum_{j=1}^{M} (1+\Delta t \lambda_{i,j}) \rho_{i,j}^{k+1} =  \sum_{i=1}^{N} \sum_{j=1}^{M}  (1+\Delta t \lambda_{i,j}) \rho_{i,j}^{k}.
\end{equation}

To prove (iii), directly taking maximum in the RHS of \eqref{tm38} implies
\begin{equation}
(1+\Delta t \lambda_{i,j})\frac{\rho^{k+1}_{i,j} }{\pi_{i,j}}\leq (1+\Delta t \lambda_{i,j})\max_{i,j} \frac{\rho^{k}_{i,j} }{\pi_{i,j}},
\end{equation}
which leads to \eqref{maxP}. 

To prove (iv), subtract $(1+\Delta t\lambda_{i,j})$ from both sides of \eqref{tm38} and then multiply by $\sgn\bbs{\frac{\rho^{k+1}_{i,j} }{\pi_{i,j}}-1}$. Thus using same argument with (iii), we have
\begin{equation}
(1+\Delta t \lambda_{i,j})\left|\frac{\rho^{k+1}_{i,j} }{\pi_{i,j}}-1\right| \leq (1+\Delta t \lambda_{i,j})\max_{i,j} \left|\frac{\rho^{k}_{i,j} }{\pi_{i,j}}-1\right|,
\end{equation}
which implies \eqref{l8semi}.

Now we  prove (v).  Recall \eqref{Dgen}, i.e.
\begin{equation}
\begin{aligned}
 \frac{\rho^{k+1}_{i,j} }{\pi_{i,j} }- \frac{\rho^{k}_{i,j} }{\pi_{i,j} } =&  -\Delta t (L_h \frac{\rho^k}{\pi})_{i,j} \\
=& \frac{\Delta t}{1+\Delta t \lambda_{i,j}} \Big[ \iright \bbs{\frac{\rho^k_{i+1,j}}{\pi_{i+1,j}}  - \frac{\rho^{k}_{i,j} }{\pi_{i,j}} } -  \ileft \bbs{\frac{\rho^k_{i,j}}{\pi_{i,j}}  - \frac{\rho^{k}_{i-1,j} }{\pi_{i-1,j}} } \\
&+ \jup \bbs{\frac{\rho^k_{i,j+1}}{\pi_{i,j+1}}  - \frac{\rho^{k}_{i,j} }{\pi_{i,j}} }  - \jdown \bbs{ \frac{\rho^k_{i,j}}{\pi_{i,j}}  - \frac{\rho^{k}_{i,j-1} }{\pi_{i,j-1}}  }    \Big].
\end{aligned}
\end{equation}

 By shifting index and no-flux boundary condition \eqref{nbc} we have
\begin{equation}
\begin{aligned}
\la -L_h \frac{\rho^k}{\pi}, \rho^k\ra_{1+\Delta t \lambda}=&-\sum_{i,j}\Big[ \irightn \bbs{\frac{\rho^k_{i+1,j}}{\pi_{i+1,j}}  - \frac{\rho^{k}_{i,j} }{\pi_{i,j}} }^2 +  \ileftn \bbs{\frac{\rho^k_{i,j}}{\pi_{i,j}}  - \frac{\rho^{k}_{i-1,j} }{\pi_{i-1,j}} }^2 \\
&+ \jupn \bbs{\frac{\rho^k_{i,j+1}}{\pi_{i,j+1}}  - \frac{\rho^{k}_{i,j} }{\pi_{i,j}} }^2  + \jdownn \bbs{ \frac{\rho^k_{i,j}}{\pi_{i,j}}  - \frac{\rho^{k}_{i,j-1} }{\pi_{i,j-1}}  }^2    \Big].
\end{aligned}
\end{equation}
From this, one know 
\begin{equation}
L_h \frac{\rho^\8}{\pi} = 0, \quad \Lra \quad  \la L_h \frac{\rho^\8}{\pi}, \rho^\8\ra_{{1+\Delta t \lambda}} =0\quad \Lra \quad \rho^\8 = c \pi = \pi.
\end{equation}

Denote $u^k_{i,j}=\frac{\rho^k_{i,j}}{\pi_{i,j}}$. Then \eqref{Dgen} is recast as
\begin{equation}\label{gap26}
\begin{aligned}
u_{i,j}^{k+1} =& \frac{1}{1+\Delta t \lambda_{i,j}} \Big[ u^{k}_{i,j} + \Delta t \bbs{\iright u^k_{i+1,j} + \ileft u^k_{i-1,j}}\\
\quad&+\Delta t \jup u^{k}_{i,j+1} + \jdown u^k_{i,j-1}\Big]=: (Au^k)_{i,j}.
\end{aligned}
\end{equation}
By the Perron-Frobenius theorem, $\mu_1=1$ is the 
simple, principal eigenvalue of $A$ with the ground state $u^*_{i,j}\equiv 1$ and other eigenvalues $\mu_i$ of $A$ satisfy $|\mu_i|<\mu_1$. Notice also the mass conservation for initial data $u^0=\frac{\rho^0}{\pi}$ satisfying \eqref{conser1}, i.e.,
\begin{equation}
\la u^0 - u^*, u^* \ra_{(1+\Delta t \lambda)\pi} =0.
\end{equation}
Since also $A$ is self-adjoint operator in the weighted space $l^2((1+\Delta t\lambda)\pi)$,  we can express $u^0$ using 
\begin{equation}
u^0 - u^* = \sum_{\ell=2}^{MN} c_\ell u_\ell , \quad u_\ell  \text{ is the eigenfunction corresponding to } \mu_\ell.
\end{equation}
 Therefore, we have
\begin{equation}
u^{k}-u^* = A (u^0 - u^*) = \sum_{\ell=2}^{MN} c_\ell \mu_\ell^k u_\ell,
\end{equation}
which concludes
\begin{equation}
\left\| u^{k}-1\right\|_{l^\8} \leq c  |\mu_2|^k \quad \text{ with } |\mu_2|<1.
\end{equation}

\end{proof}

\subsection{Thresholding for sharp dynamics}\label{thres1}
In this section, we combine the thresholding scheme with the Fokker-Planck dynamics to generate the inbetween motions with sharp interface, i.e., the density is described by linear combinations of two characteristic functions. In the computations later, one will see  that the thresholding scheme also helps to achieve the finite time convergence to the sharp equilibrium density.

Notice the dynamics of the Fokker-Planck equation is invariant when replacing $\rho$ by $c\rho$. Therefore, the initial density shall be adjust based on the mass conservation law \eqref{i_adj}. After this initial adjustment,
assume initial data $\rho^0_{i,j}\in \{\rho^0_s, \rho^0_b\}$, which takes alternatively the value $\rho^0_s, \rho^0_b.$ Assume the equilibrium is $\pi_{i,j}\in \{\pi_s, \pi_b\} $ which takes alternatively the value $\pi_s, \pi_b.$

To combine the thresholding scheme with the Fokker-Planck dynamics, we  need to choose the threshold $\xi^k$ at each step to conserve \eqref{conser1_n} as follows:
\\Step 1. Given $\rho^k_{i,j}\in \{\pi_s, \pi_b\}$, compute the explicit Fokker-Planck scheme \eqref{tm38} to update $\tilde{\rho}^{k+1}_{i,j}\in [\pi_s,\pi_b]$ for any $i=1,2,\cdots,N$ and $j=1, 2, \cdots, M$. 
\\Step 2. Choose threshold $\xi^{k+1}$ and define
\begin{equation}
\rho^{k+1}_{i,j}:= \pi_s \chi_{\{i,j; \tilde{\rho}^{k+1}_{i,j} \leq \xi^{k+1}\}} + \pi_b \chi_{\{i,j; \tilde{\rho}^{k+1}_{i,j} > \xi^{k+1}\}}
\end{equation}
such that $\rho^{k+1}$ satisfies \eqref{conser1_n}. 

In Step 2, $\xi^{k+1}$ can be found using bisection such that $$f(\xi^{k+1}):=\sum_i (1+ \lambda_{i,j} \Delta t) \rho_{i,j}^{k+1} - \sum_i (1+ \lambda_{i,j} \Delta t )\pi_{i,j}=0.$$

\section{EDDA based on point-clouds: Fokker-Plank equation on $\nn$ }\label{sec_fp_n}

Suppose $(\nn, d_\nn)$ is a $d$ dimensional smooth closed submanifold of $\mathbb{R}^3$.  Assume the end image on $\nn$ is described by a equilibrium density $\rho_\8(\mx): \nn \to \bR.$
Then the  Fokker-Planck equation is given by
\begin{equation}\label{FPN}
\pt_t \rho = \divn \cdot \bbs{\rho_\8 \nabla_\nn \bbs{\frac{\rho}{\rho_\8}}},
\end{equation}
where $\nabla_\nn:= \sum_{i=1}^d \tau^{\nn}_i \nabla_{\tau^{\nn}_i}$ is surface gradient, $\nabla_{\tau^{\nn}_i}=\tau^{\nn}_i \cdot \nabla$ is the tangential derivative in the direction of $\tau^{\nn}_i$ and $\divn$ is the surface divergence defined as $\divn \xi = \sum_{i=1}^d \tau_i^\nn \cdot \nabla_{\tau_i^\nn} \xi.$
This can be recast as the relative entropy formulation
\begin{equation}
\pt_t \rho = \divn \cdot \bbs{ \rho \nabla_\nn \ln\frac{\rho}{\rho_\8}} .
\end{equation}

\subsection{Construction of Voronoi tessellation and the upwind scheme  on manifold $\nn$}\label{sec_5.1}
In this section, we construct an upwind scheme based on Voronoi tessellation for manifold $\nn$, which automatically gives a positive-preserving upwind scheme for the Fokker-Planck \eqref{FPN}.

Suppose $(\nn, d_\nn)$ is a $d$ dimensional smooth closed submanifold of $\mathbb{R}^3$ and $d_{\nn}$ is  induced by the Euclidean metric in $\bR^3$. $Q:=\{\my_i\}_{i=1}^n $ are sampled from the equilibrium density $\pi=\rho^\nn_\8$. Define the Voronoi cell as
\begin{equation}
C_i:= \{\my\in \nn ; \ud_\nn(\my,\my_i)\leq \ud_\nn(\my,\my_j) \text{ for all }\my_j\in Q\},
\end{equation} 
with the volume $|C_i|=\hs^d(C_i)$.
Then $\nn=\cup_{i=1}^n  C_i$ is a Voronoi tessellation of manifold $\nn$. One can see each $C_i$ is star shaped. Denote the Voronoi face for cell $C_i$ as 
\begin{equation}
\Gamma_{ij}:= C_i\cap C_j,  \text{ and its area as } |\Gamma_{ij}|=\hs^{d-1}(\Gamma_{ij})
\end{equation} 
for any $j=1, \cdots, n$. If $\Gamma_{ij}= \emptyset$ or $i\neq j$ then we set $|\Gamma_{ij}|=0$.

 Let $\chi_{C_i}$ be the characteristic function such that $\chi_{C_i}=1$ for $\my\in C_i$ and $0$ otherwise.
For $i=1,\cdots, n$, 
$$\rho^{\text{approx}}(\my)=\sum_{i=1}^n \rho_i \chi_{C_i}(\my)$$
 is the piecewise constant probability distribution on $\nn$ provided $\sum_{i=1}^n \rho_i|C_i|=1$ and $\rho_i\geq 0$.  Let $\pi_i$ be the approximated equilibrium density at $\my_i$ satisfying $\sum_{i=1}^n \pi_i|C_i|=1$. If $\rho^{\text{approx}}(\my)=\sum_i \rho_i \chi_{C_i}(\my)$ is an approximation of density $\rho_\nn(\my)$, then $\rho_i$ is an approximation of the density $\rho_\nn(\my_i)$.  

Define the associated adjacent  grids as 
\begin{equation}
VF(i):=\{j; ~\Gamma_{ij}\neq \emptyset\}.
\end{equation}
Then using the finite volume method and the divergence theorem on manifold, we have
\begin{equation}
\frac{\ud}{\ud t } \rho_i |C_i|=\frac{\ud }{\ud t} \int_{C_i} \rho^{\text{approx}} \hs^d(C_i) = \sum_{j\in VF(i) } \int_{\Gamma_{ij}} \pi \mathbf n \cdot \nabla_\nn\left(\frac{\rho^{\text{approx}}}{\pi} \right)  \hs^{d-1}(\Gamma_{ij}),
\end{equation}
where $\mathbf{n}$ is the unit outward normal vector field on $\partial C_i$.
Based on this, we introduce the following upwind scheme. For $i=1, \cdots, n$,
\begin{equation}\label{mp}
\frac{\ud}{\ud t}\rho_i |C_i|= \frac12 \sum_{j\in VF(i)} \frac{\pi_i+ \pi_j}{|y_i-y_j|} |\Gamma_{ij}|\left( \frac{\rho_j}{\pi_j}- \frac{\rho_i}{\pi_i} \right).
\end{equation}

We now interpret the upwind scheme as
 the forward equation for a Markov process with 
 transition probability $P_{ij}$ (from $j$ to $i$) and jump rate $\lambda_j$
\begin{equation}\label{mp1}
\frac{\ud}{\ud t}\rho_i |C_i| = \sum_{j\in VF(i)} \lambda_j P_{ij} \rho_j |C_j| - \lambda_i \rho_i |C_i|,\quad i=1, 2, \cdots, n;
\end{equation}
where 
\begin{equation}\label{def59}
\begin{aligned}
\lambda_i := \frac1{2|C_i|\pi_i}\sum_{j\in VF(i)} \frac{\pi_i+ \pi_j}{|y_i-y_j|}|\Gamma_{ij}|, \quad i=1, 2, \cdots, n; \\
 P_{ij}:=\frac{1}{\lambda_j}\frac{\pi_i+ \pi_j}{2\pi_j |C_j|}\frac{|\Gamma_{ij}|}{|y_i-y_j|}, \quad j\in VF(i); \quad P_{ij}=0, \quad j\notin VF(i).
 \end{aligned}
\end{equation}
One can see
it 
satisfies $\sum_i P_{ij}=1$ and the detailed balance property
\begin{equation}\label{db}
 P_{ij} \lambda_j\pi_j |C_j| =  P_{ji} \lambda_i\pi_i |C_i|.
\end{equation}
We refer to \cite{GLW20} for the ergodicity of this Markov process.

In practice, 
instead of the $|C_i|, \Gamma_{ij}$
in \eqref{mp1}, one shall use the approximated coefficients $\tilde{C}_i$ and $\tilde{\Gamma}_{ij}$ because we do not know the exact metric information of the manifold based only on point clouds. We omit the algorithm of finding the approximated $\tilde{C}_i$ and $\tilde{\Gamma}_{ij}$ and refer to \cite{GLW20}.

\subsection{Unconditional stable explicit time stepping and exponential convergence}
 Now we propose an unconditionally stable explicit time discretization for the upwind scheme \eqref{mp}, which enjoys several good properties as the scheme \eqref{explicit_scheme}, such as maximal principle, mass conservation law and exponential convergence.

Let $\rho^{k}_i $ be the discrete density at discrete time $k\Delta t$. To achieve both stability and efficiency, we introduce the following unconditional stable explicit scheme 
\begin{equation}\label{559semi}
\frac{\rho_i^{k+1}}{\pi_i} = \frac{\rho^k_i}{\pi_i}-\lambda \Delta t \frac{\rho^{k+1}_i}{\pi_i}  +  \Delta t\sum_{j\in VF(i)} \lambda_i P_{ji}   \frac{\rho^{k}_j }{\pi_j}, \quad i=1, 2, \cdots, n
\end{equation}
which is
\begin{equation}\label{semi_1}
\frac{\rho_i^{k+1}}{\pi_i} =\frac{\rho^k_i}{\pi_i} +  \frac{ \lambda_i  \Delta t }{1+\lambda_i \Delta t} \left(  \sum_{j\in VF(i)} P_{ji}   \frac{\rho^{k}_j }{\pi_j} -\frac{\rho^k_i}{\pi_i} \right).
\end{equation}
For $u_i^{k+1}:=\frac{\rho_i^{k+1}}{\pi_i}$,  the matrix formulation of \eqref{semi_1} is
\begin{equation}\label{matrix_semi}
u^{k+1} = (I+\Delta t \hat{B}) u^k,
\end{equation}
where 
\begin{equation}\label{BBn}
\hat{B}:= \{\hat{b}_{ij}\}= \left\{\begin{array}{cc}
-\frac{ {\lambda}_i  \ }{1+{\lambda}_i \Delta t}  , \quad &j=i;\\
\frac{ {\lambda}_i   }{1+{\lambda}_i \Delta t} {P}_{ji}, \quad &j\neq i.,
\end{array}\right. \quad \text{ with } \sum_j \hat{b}_{ij}=0. 
\end{equation}

We give the following proposition for several properties of scheme \eqref{559semi}. The proof of this proposition is similar to Proposition \ref{prop1} so we omit it.
\begin{prop}
Let $\Delta t$ be the time step and consider the explicit scheme \eqref{559semi}. Assume the initial data satisfies 
\begin{equation}\label{adjust}
\sum_i (1+ \lambda_i \Delta t) \rho_i^0 |{C}_i| = \sum_i (1+ \lambda_i \Delta t )\pi_i |{C}_i|.
\end{equation}
Then
we have
\begin{enumerate}[(i)]
\item the conversational law for $g_i^{k+1}:= (1 + \Delta t {\lambda}_i ) \rho_i^{k+1}|{C}_i|$, i.e.
\begin{equation}\label{conser1}
\sum_i (1+ \lambda_i \Delta t) \rho_i^{k+1} |{C}_i| = \sum_i (1+ \lambda_i \Delta t )\rho_i^{k} |{C}_i|;
\end{equation}
\item  the unconditional maximal principle for $\frac{\rho_i}{\pi_i}$
\begin{equation}\label{maxP}
\max_j \frac{\rho^{k+1}_j }{\pi_j}\leq \max_j \frac{\rho^{k}_j }{\pi_j}.
\end{equation}
\item the $\ell^\8$ contraction
\begin{equation}\label{l8semi}
\max_i \left| \frac{\rho^{k+1}_i}{\pi_i}-1\right| \leq \max_i \left| \frac{\rho^{k}_i}{\pi_i}-1\right| ;
\end{equation}
\item the exponential convergence
\begin{equation}\label{gap_error}
\left\| \frac{\rho^{k}_i}{\pi_i}-1\right\|_{\ell^\8} \leq c  |\mu_2|^k, \quad |\mu_2|<1,
\end{equation}
where $\mu_2$ is the second eigenvalue of $I+\Delta t \hat{B}$ (in terms of magnitude), i.e. $\mu_2=1-\text{gap}_{\hat{B}}\Delta t$ and $\text{gap}_{\hat{B}}$ is the spectral gap of $\hat{B}$. 
\end{enumerate}
\end{prop}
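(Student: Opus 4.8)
The plan is to mirror the proof of Proposition \ref{prop1}, exploiting the Markov-process / detailed-balance structure in \eqref{mp1}--\eqref{db} together with the semi-implicit form \eqref{semi_1}. Throughout, write $u_i^k := \rho_i^k/\pi_i$ and work with the matrix identity \eqref{matrix_semi}, $u^{k+1}=(I+\Delta t\hat B)u^k$, keeping in mind the row-sum property $\sum_j\hat b_{ij}=0$ from \eqref{BBn} and the fact that the off-diagonal entries of $I+\Delta t\hat B$ are nonnegative while the diagonal entry is $\frac{1}{1+\lambda_i\Delta t}>0$; hence $I+\Delta t\hat B$ is a nonnegative, row-stochastic matrix for every $\Delta t>0$, which is exactly what makes the scheme unconditionally stable.

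\textbf{Conservation (i).} Start from \eqref{559semi} in the form $(1+\lambda_i\Delta t)\rho_i^{k+1}|C_i| = \rho_i^k|C_i| + \Delta t\sum_{j\in VF(i)}\lambda_i P_{ji}\rho_j^k|C_i|$; rewrite the transport term using $\pi_i$ so that $\lambda_i P_{ji}\rho_j^k|C_i| = \frac{\pi_i+\pi_j}{2|y_i-y_j|}|\Gamma_{ij}|\,\frac{\rho_j^k}{\pi_j}$ by \eqref{def59}. Summing over $i$ and using the symmetry $|\Gamma_{ij}|=|\Gamma_{ji}|$, $|y_i-y_j|=|y_j-y_i|$ to swap the roles of $i$ and $j$ in the double sum, the transport contribution collapses to $\sum_i \rho_i^k|C_i|\cdot\frac1{2}\sum_{j\in VF(i)}\frac{\pi_i+\pi_j}{|y_i-y_j|}\frac{|\Gamma_{ij}|}{\pi_i} = \sum_i \lambda_i\rho_i^k|C_i|$ by the definition of $\lambda_i$; combined with the $\rho_i^k|C_i|$ term this yields $\sum_i(1+\lambda_i\Delta t)\rho_i^k|C_i|$, which is \eqref{conser1}. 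Equivalently, one notes $\sum_i g_i^{k+1} = \sum_i (1+\lambda_i\Delta t)\pi_i|C_i|\,u_i^{k+1}$ and checks that the left eigenvector of $I+\Delta t\hat B$ for eigenvalue $1$ is $\big((1+\lambda_i\Delta t)\pi_i|C_i|\big)_i$, which is the detailed-balance statement \eqref{db}.

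\textbf{Maximum principle and contraction (ii)--(iii).} Since $I+\Delta t\hat B$ is row-stochastic with nonnegative entries, $u_i^{k+1} = \sum_j (I+\Delta t\hat B)_{ij} u_j^k$ is a convex combination of the $u_j^k$, so $\max_i u_i^{k+1}\le\max_j u_j^k$, giving \eqref{maxP}; applying the same convexity to $u_i^k-1$ (legitimate because the constant vector $\mathbf 1$ is fixed by $I+\Delta t\hat B$, again by the zero row sums) and taking absolute values gives $|u_i^{k+1}-1|\le\sum_j(I+\Delta t\hat B)_{ij}|u_j^k-1|\le\max_j|u_j^k-1|$, which is \eqref{l8semi}. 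Alternatively one can argue directly from \eqref{semi_1} exactly as in parts (iii)--(iv) of Proposition \ref{prop1}: take the index achieving the max, use $\sum_{j\in VF(i)}P_{ji}=1$, and move the $\rho_i^{k+1}$ term to the left.

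\textbf{Exponential convergence (iv).} By detailed balance \eqref{db}, $\hat B$ — hence $I+\Delta t\hat B$ — is self-adjoint in the weighted inner product $\langle u,v\rangle_w := \sum_i w_i u_i v_i$ with weights $w_i := (1+\lambda_i\Delta t)\pi_i|C_i|$; indeed $w_i\hat b_{ij} = \frac{\pi_i+\pi_j}{2|y_i-y_j|}|\Gamma_{ij}| = w_j\hat b_{ji}$ is symmetric. The associated quadratic form is $\langle -\hat B u,u\rangle_w = \frac12\sum_{i,j\in VF(i)}\frac{\pi_i+\pi_j}{2|y_i-y_j|}|\Gamma_{ij}|(u_i-u_j)^2\ge 0$, and it vanishes only when $u$ is constant on each connected component; assuming the Voronoi adjacency graph is connected, $\mathbf 1$ spans the kernel, so $0$ is a simple eigenvalue of $-\hat B$ and all other eigenvalues are strictly positive, i.e. $\text{gap}_{\hat B}>0$. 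Consequently the eigenvalues of $I+\Delta t\hat B$ are $1 > 1-\text{gap}_{\hat B}\Delta t \ge \mu_3\ge\cdots$; since $I+\Delta t\hat B$ is also a stochastic matrix its spectrum lies in $[-1,1]$ once $\Delta t$ is such that $1-\lambda_i\Delta t/(1+\lambda_i\Delta t)\ge 0$, which holds for all $\Delta t>0$, so in fact $|\mu_\ell|\le 1$ automatically and $\mu_2 = 1-\text{gap}_{\hat B}\Delta t\in[0,1)$. The adjustment \eqref{adjust} says precisely $\langle u^0-\mathbf 1,\mathbf 1\rangle_w = 0$, so expanding $u^0-\mathbf 1 = \sum_{\ell\ge2}c_\ell v_\ell$ in the $w$-orthonormal eigenbasis gives $u^k-\mathbf 1 = \sum_{\ell\ge2}c_\ell\mu_\ell^k v_\ell$, and $\|u^k-\mathbf 1\|_{\ell^\infty}\le C\|u^k-\mathbf 1\|_w\le C|\mu_2|^k\|u^0-\mathbf 1\|_w$, which is \eqref{gap_error}.

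The main obstacle is bookkeeping rather than conceptual: one must verify carefully that the discrete transport term in \eqref{559semi}, when rewritten via \eqref{def59}, produces the symmetric weights $\frac{\pi_i+\pi_j}{2|y_i-y_j|}|\Gamma_{ij}|$ so that the index swap in (i) and the self-adjointness in (iv) go through; this is where the detailed-balance identity \eqref{db} is doing the real work. A secondary point worth stating explicitly is the connectedness of the Voronoi graph, which is needed for simplicity of the principal eigenvalue — for a connected closed manifold sampled densely enough this holds, and it is the discrete analogue of $\pi>0$ used in the continuous ergodicity argument of Section \ref{sec2}. Since the paper says this proof is omitted as "similar to Proposition \ref{prop1}," it suffices to indicate these correspondences rather than reproduce the computation.
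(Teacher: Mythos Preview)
Your proposal is correct and follows essentially the same route as the paper's proof of Proposition~\ref{prop1} (which is the template, since the paper omits the present proof as ``similar''): summation/index-swap for conservation, taking the maximum on the right-hand side for the maximal principle and $\ell^\infty$ contraction, and Perron--Frobenius together with self-adjointness in a weighted $\ell^2$ for the spectral gap. Your phrasing via the row-stochastic, nonnegative matrix $I+\Delta t\hat B$ and the detailed-balance weight $w_i=(1+\lambda_i\Delta t)\pi_i|C_i|$ is a slightly cleaner packaging of exactly the same computations the paper carries out by hand in the structured-grid case, and your explicit mention of the connectedness hypothesis (needed for simplicity of the principal eigenvalue) is a point the paper leaves implicit. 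One small remark: your claim $\mu_2\in[0,1)$ is stronger than needed and not automatic for large $\Delta t$; what Perron--Frobenius (primitivity from the positive diagonal plus irreducibility) actually gives is $|\mu_\ell|<1$ for $\ell\ge 2$, which is all that \eqref{gap_error} requires.
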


\subsection{Thresholding for sharp dynamics}\label{thres2}
Assume the initial density is adjusted based on the mass conservation law \eqref{adjust}.  We now give the sharp dynamics by combining the Fokker-Planck equation on manifold with the thresholding scheme.

Assume initial data $\rho^0_i\in \{\rho^0_s, \rho^0_b\}$, which takes alternatively the value $\rho^0_s, \rho^0_b.$ Assume the equilibrium is $\pi_i\in \{\pi_s, \pi_b\} $ which takes alternatively the value $\pi_s, \pi_b.$ 

Similar to Section \ref{thres1}, we choose the threshold $\xi^k$ at each step to conserve \eqref{adjust} as follows.
\\Step 1. Given $\rho^k_i\in \{\pi_s, \pi_b\}$, compute the explicit scheme \eqref{559semi} to update $\tilde{\rho}^{k+1}_i\in [\pi_s,\pi_b]$ for any $i=1,2,\cdots, n$. 
\\Step 2. Choose threshold $\xi^{k+1}$ and define
\begin{equation}\label{thres}
\rho^{k+1}_i:= \pi_s \chi_{\{i; {\rho}^{k+1}_i \leq \xi^{k+1}\}} + \pi_b \chi_{\{i; {\rho}^{k+1}_i > \xi^{k+1}\}}, \quad i=1,2,\cdots, n
\end{equation}
such that $\rho^{k+1}$ satisfies \eqref{adjust}. Here $\xi^{k+1}$ can be found using bisection such that 
\begin{equation}\label{tm_bis}
f(\xi^{k+1}):=\sum_i (1+ \lambda_i \Delta t) \rho_i^{k+1} |{C}_i| - \sum_i (1+ \lambda_i \Delta t )\pi_i |{C}_i|=0.
\end{equation}

\section{Computations}\label{sec_simu}
In this section, three numerical examples are carried out to examine the capability and efficiency of the equilibrium-driven deformation algorithm (EDDA),  which are the RGB colored facial aging transformation, the pneumonia of COVID-19  invading and fading away on CT  scan images and the continental evolution process. 

\subsection{Example I: RGB colored facial aging transformation.}

In this example, we have two images with the same size in the RGB color model showing a lady’s face at two different age, and employ the model to simulate the transformation from one image (initial) to another image (equilibrium), which will illustrate the facial aging process with time. The strategy is to define each image as three matrices, each matrix containing the value of a color mode (R or G or B). Then the transformation between the two images is completed by applying the inbetweening auto-animation three times based on Fokker-Planck dynamics \eqref{FPo}.

The two images are extracted from \cite{web} and are both $355$ pixels in width and $575$ pixels in height, which means a total of $204125$ pixel points in each image. The initial image data is first adjusted to meet the mass conservation law \eqref{i_adj}. Time step  $\Delta t$ is set to $0.01$ and the total number of iterations is set to $10000$ thus the final iteration time $T=100$. The horizontal resolution $\Delta x$ and $\Delta y$ are both $10^{-4}$. We use the unconditional stable explicit time stepping scheme \eqref{explicit_scheme} and the no-flux boundary condition \eqref{nbc} to  the  Fokker-Planck equation \eqref{FPo} in domain $\Omega$.

The relative root mean square errors
\eqref{gap_error_n}
 for the three color-modes are illustrated in Fig. \ref{fig1_error3D} separately in semiology plot. Except for the different descend rates for the three colors, all simulated errors have the exponential convergence rates, which is consistent with the analysis in Proposition \ref{prop1}.

\begin{figure}
 \includegraphics[scale=0.5]{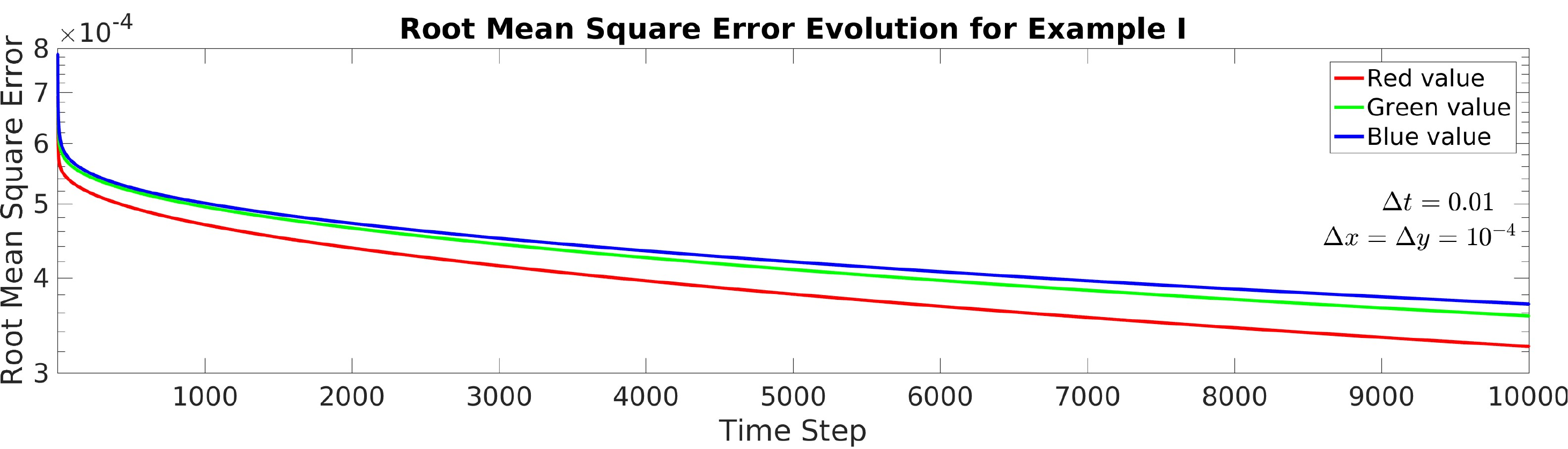} 
\caption{The semilog plot of temporal evolution of relative root mean square errors for the RGB facial aging transformation  with parameters $\Delta t =0.01,  T=100$ and $\Delta x =\Delta y=10^{-4}$. The red, green and blue lines represent the relative errors of the corresponding color modes. }\label{fig1_error3D}
\end{figure}
\begin{figure}
\includegraphics[scale=0.55]{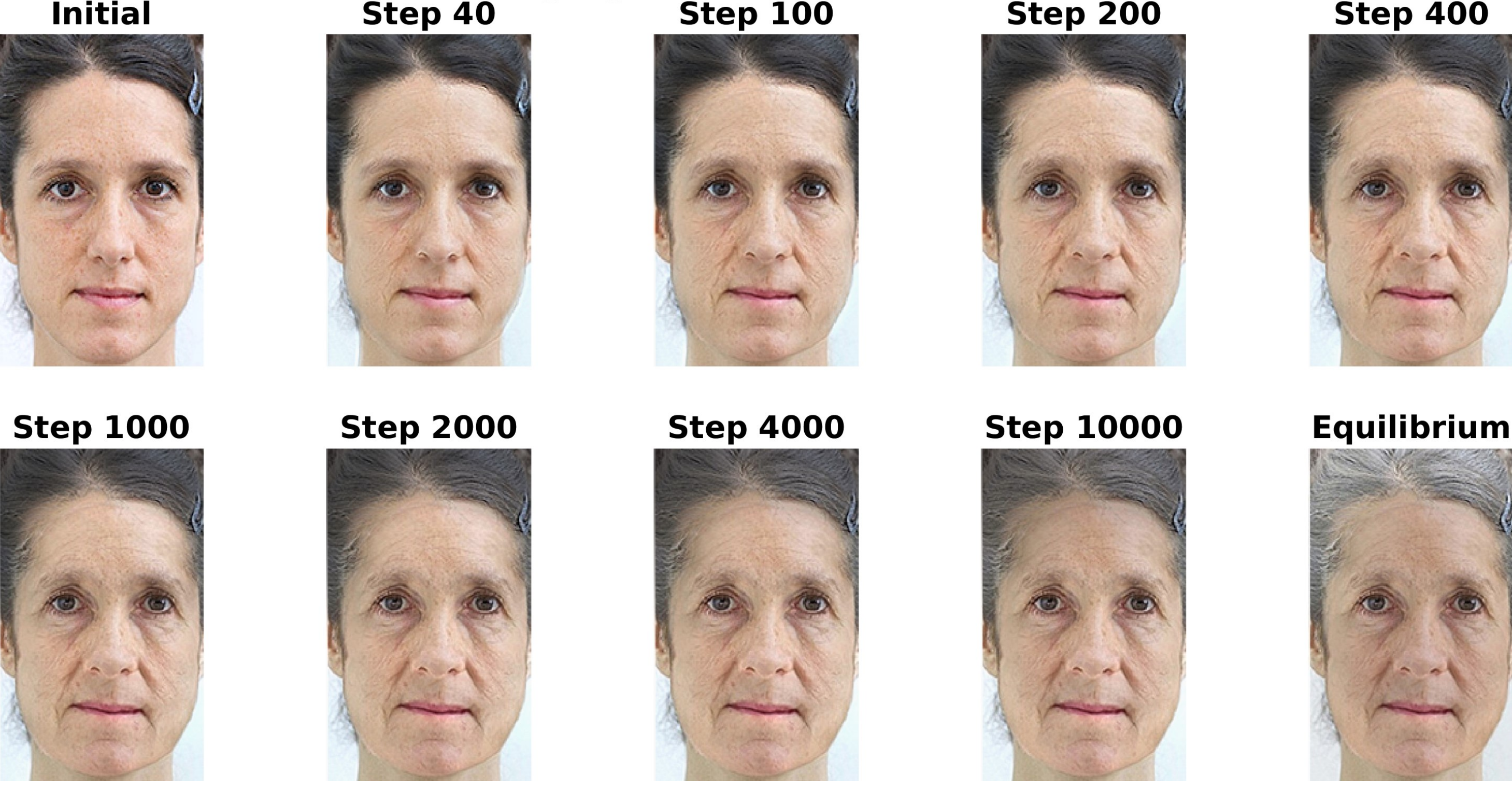} 
\caption{Facial aging transformation from initial to equilibrium with parameters $\Delta t =0.01,  T=100$ and $\Delta x =\Delta y=10^{-4}$. The updated results after time step $40, 100$, $200$, 400, $1000, 2000$, $4000$, $10000 $ are shown and compared to the initial and equilibrium images. }\label{fig2_3D}
\end{figure}

In order to see the transformation process between the two images, the images after iteration step $40, 100$, $200$, 400, $1000, 2000$, $4000$ and $10000 $ are shown and compared with the initial and the equilibrium images in Fig. \ref{fig2_3D}. The transformation process between two images are fast in the beginning (e.g. before step 200) and relatively slow after then. The transformation process in Fig. \ref{fig2_3D} clearly reveals the potential changes in different parts of the lady’s face and hair with time. After 10000 steps of iterations, the updated image is nearly the same with the equilibrium except for the hair color.

\subsection{Example II: COVID-19 pneumonia invading and fading away on CT scan images}
In this section we will focus on an example based on the COVID-19 pneumonia invading and fading away process in a patient’s lung reflected on CT scan images and try to show the possible COVID-19 pneumonia growth dynamics with time before and after the treatment. In order to fulfill the task, two parts of simulations are presented. In the first part, two CT scan images  taken on a patient’s lungs in the beginning (January 23th) and severe state (February 2nd) of the disease \cite{ZL216} are selected to be the initial and equilibrium state, respectively; see Fig. \ref{fig4a} (left). In the second part, two scan images at the severe state (February 2nd) and after a few-days’ treatment (February 9th) are selected to be the initial and equilibrium state, respectively; see Fig. \ref{fig4a} (right). Each CT scan image can be represented by a gray scale image matrix thus the same method in Example I can be applied. The CT scan images are all cropped to 461 pixels in width and 370 pixels in height, which means a total of 170570 pixel points in each image. The time step $\Delta t$ is $0.01$ and the total number of iterations is  $6000$ thus the final iteration time $T=60$. The resolutions are $\Delta x =\Delta y=10^{-4}$.
\begin{figure}
\includegraphics[scale=0.4]{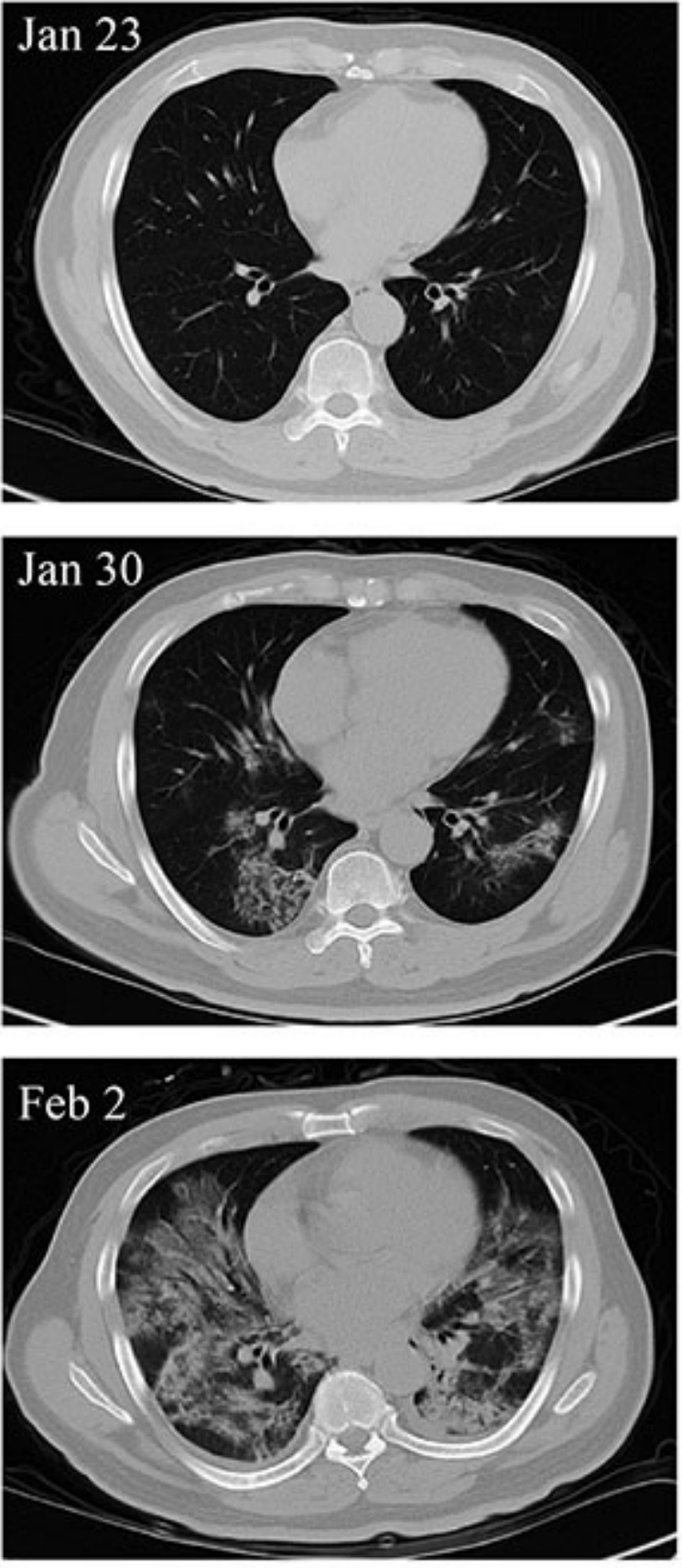}
\hspace{1cm}
 \includegraphics[scale=0.4]{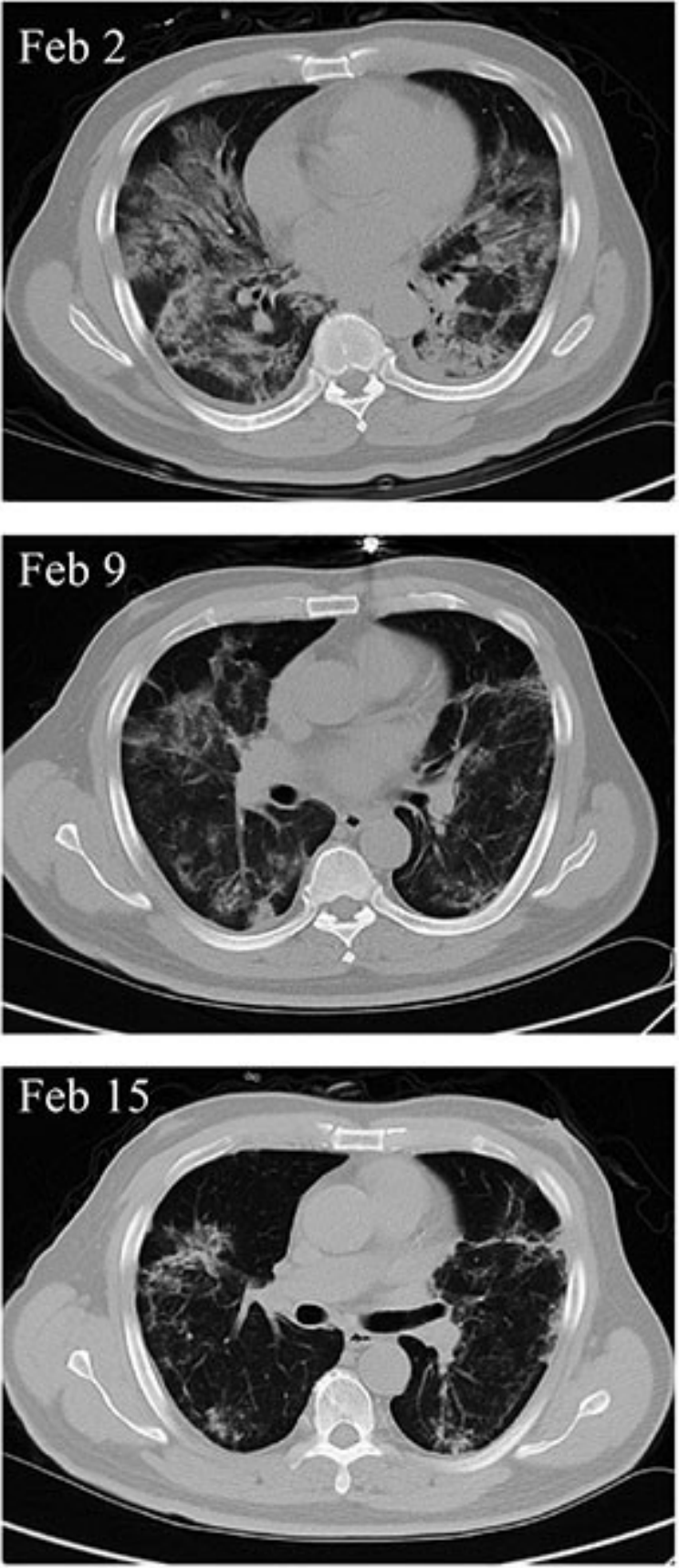} 
\caption{Chest CT images of the critically severe COVID-19 patient \cite{ZL216}. The left column of figures illustrates the evading of pneumonia from January 23th to February 2nd and the right column illustrates the fading away of pneumonia from February 2nd to February 15th after treatment.}\label{fig4a}
\end{figure}

\begin{figure}
\includegraphics[scale=0.5]{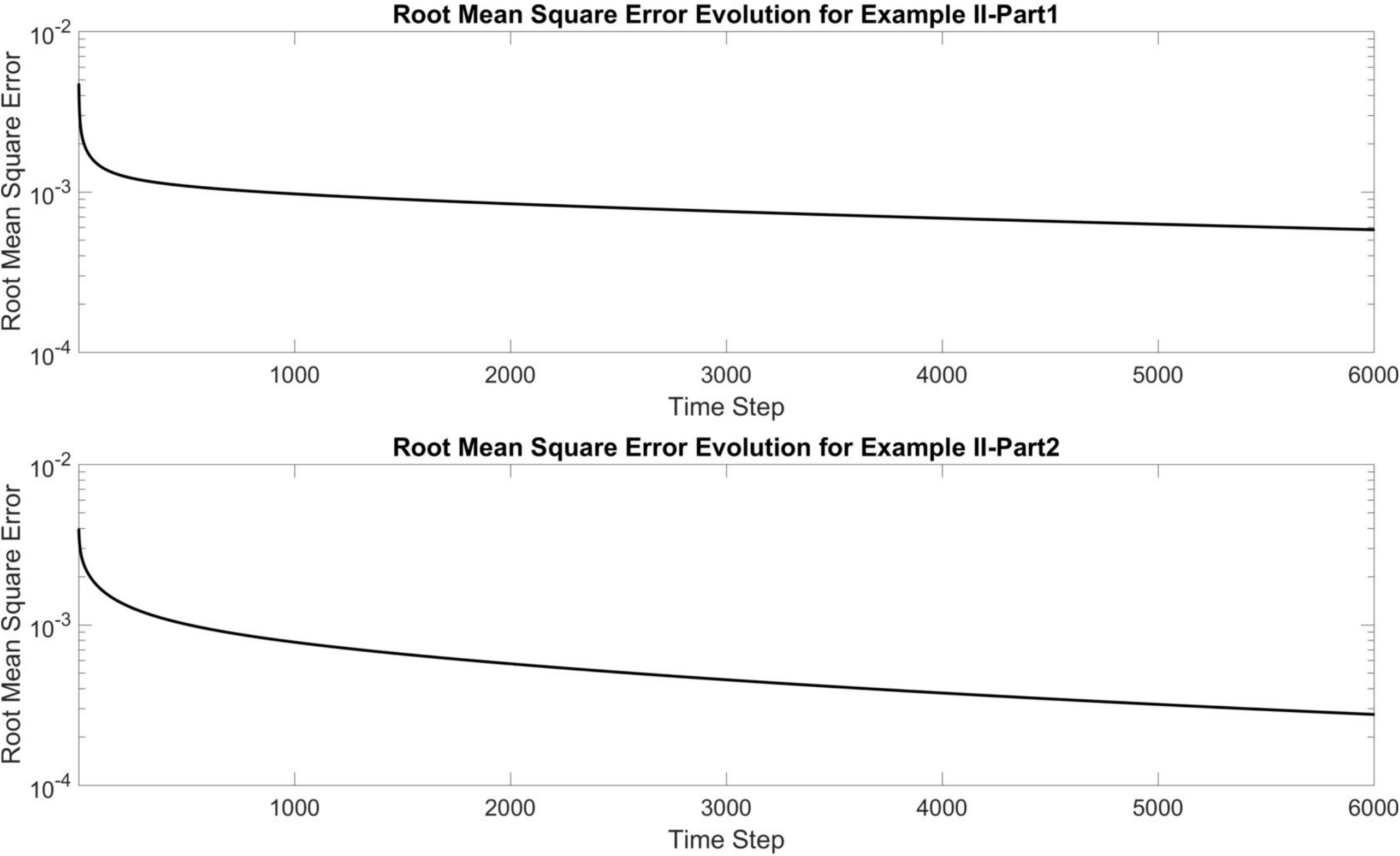} 
\caption{The semilog plot of temporal evolution of the relative root mean square errors for COVID-19 pneumonia invading and fading away on CT scan images with the parameters $\Delta t =0.01,  T=100$ and $\Delta x =\Delta y=10^{-4}$. (up) The error evolution for the pneumonia invading process simulation. (down) The error evolution for the pneumonia fading away process simulation.}\label{fig3_error2D} 
\end{figure}

After 6000 iterations, the relative root mean square errors from two parts of simulations both decrease with an exponential rate, as is shown in Fig. \ref{fig3_error2D}. Moreover, the image evolution after step 20, 50, 100, $200$, $500$, 1000, 5000, 10000 (see Fig. \ref{fig4_e2D}) clearly demonstrate the pneumonia invading process into the patient’s lungs caused by COVID-19 in a few days (upper group of figures in  Fig. \ref{fig4_e2D}) and the pneumonia fading away from the lungs after a stem cell treatment is applied to the patient (lower group of figures in Fig. \ref{fig4_e2D}), indicating a potential success of this treatment \cite{ZL216}.  We can further compare the evolution process with the real CT scan images taken on January 30th   (see Fig. \ref{fig4a}) and find satisfactory agreements, which indicates promising applications in this field.
\begin{figure}
\includegraphics[scale=0.68]{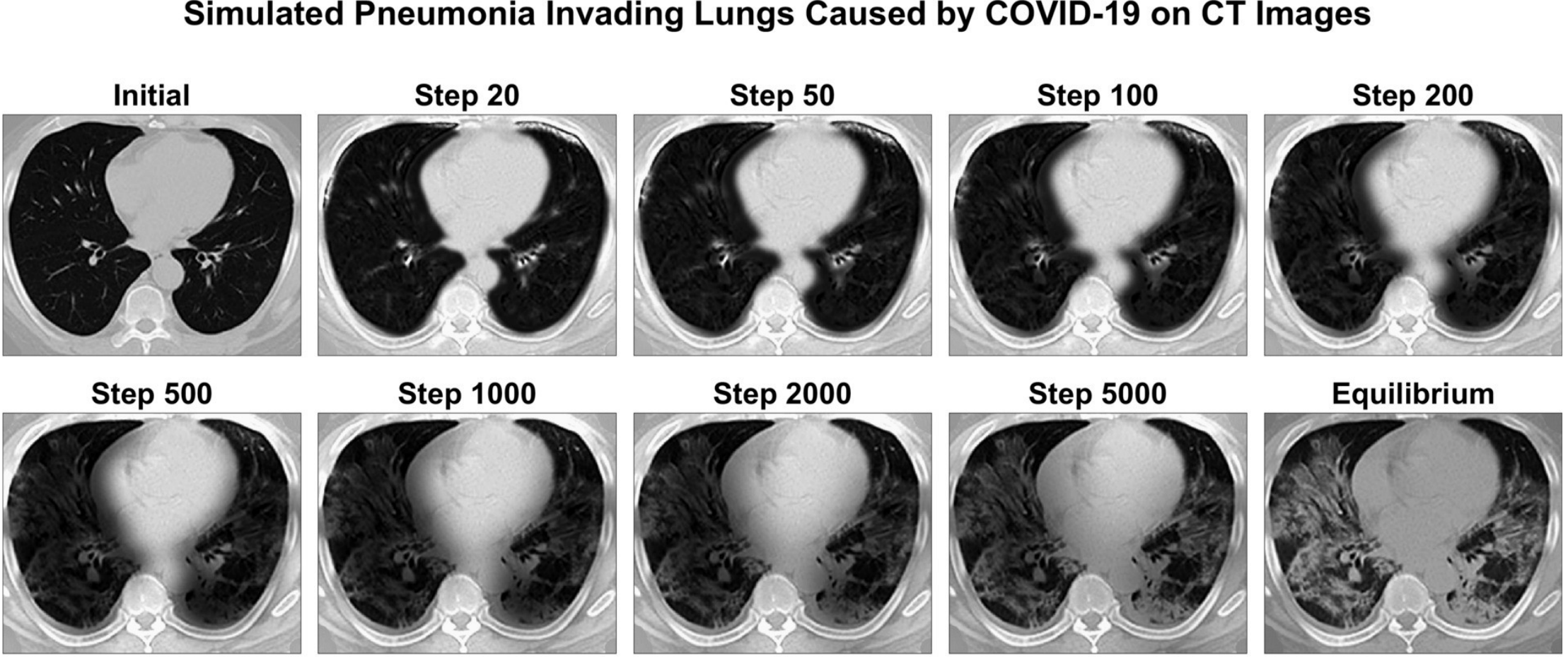} 
\includegraphics[scale=0.68]{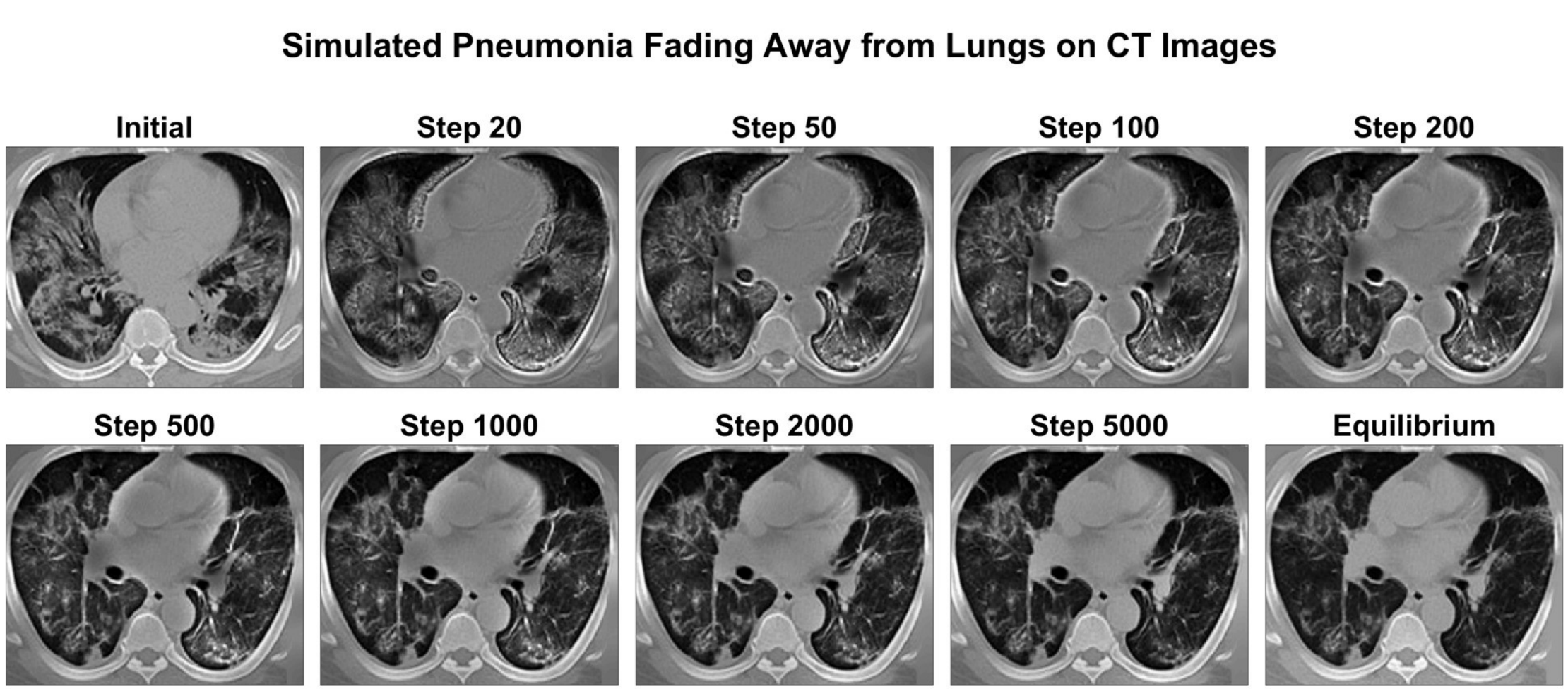} 
\caption{The simulated COVID-19 pneumonia invading and fading away process on CT scan images with the parameters $\Delta t =0.01,  T=100$ and $\Delta x =\Delta y=10^{-4}$. Results after the step 20, 50, 100, 200, 500, 1000, 2000, 5000 are illustrated and compared to the initial and equilibrium scan images. The white part inside the lungs shown on images indicates the evidence of pneumonia. (up) The pneumonia invades into the patient’s lungs caused by COVID-19. (down) The pneumonia fades away from the lungs after a stem cell treatment is applied to the patient.}\label{fig4_e2D}
\end{figure}

\subsection{Example III: Continental evolution process with thresholding for sharp dynamics}
In this section, we try to reveal the evolution process of continentals in the world from Pangaea supercontinent (250 million years ago) to the current globe. In order to clearly distinguish the sharp dynamics evolving the continentals and the oceans, the thresholding scheme \eqref{thres} and the Fokker-Planck dynamics \eqref{559semi} are combined to generate the inbetweening motions with the above sharp interfaces. The numerical experiment is carried out as follows.
\smallskip
\\
Step (I). A group of points is selected on a unit sphere to be the dataset points. With the Centroidal Voronoi Tessellation (CVT) method on the unit sphere \cite{renka1997algorithm, Du_Gunzburger_Ju_2003}, the Voronoi cells on the unit sphere are generated and the locations of dataset points are adjusted accordingly to ensure the uniformity of these cells. Thus, the distributions of continentals and oceans derived from Pangaea period and current globe’s topography are described by two values (i.e. $\pi_s$ and $\pi_b$) on the Voronoi cells and are set to be the initial and equilibrium states, respectively.  The Voronoi cell area $C_i$, $i=1, \cdots, n,$ with   the total number of dataset points $n$, is computed and the Voronoi face $\Gamma_{ij}$ is determined by the geodesic length of the neighboring arc between cell $i$ and $j$. 
\\
Step (II). Update the density at each point using the explicit scheme \eqref{559semi}  linear Fokker-Planck equation.\\
Step (III). After several linear iteration steps, the threshold is selected following the steps in Section \ref{thres2} and the thresholding scheme is applied to update the data. 

The computations for Step (II) and Step (III) will be looped until reaches the total iteration steps. Besides, a simulation case which only evolves the linear Fokker-Planck equation is carried out as the comparison. 

\begin{figure}
\includegraphics[scale=0.25]{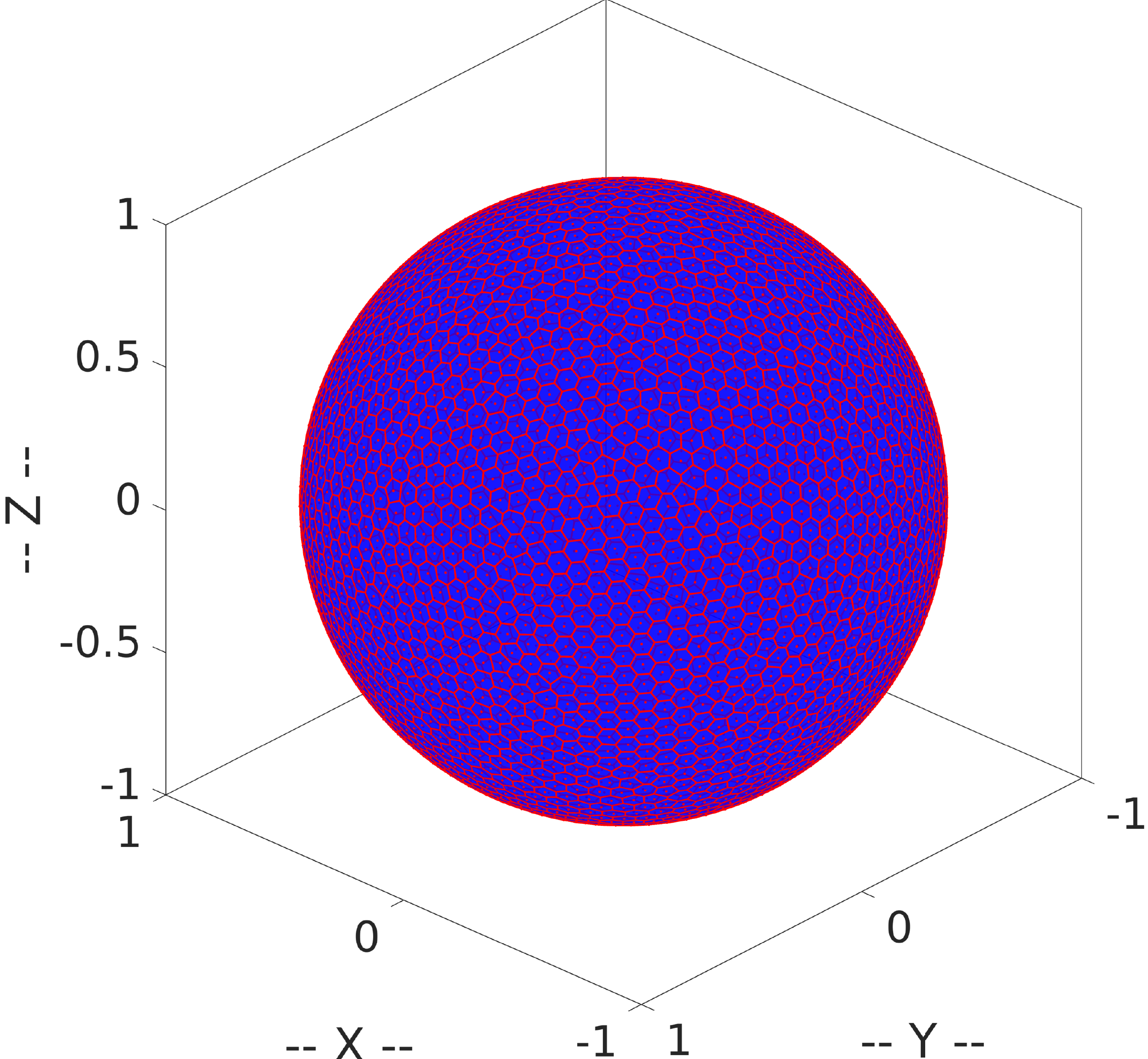} 
\caption{The unit sphere and the Voronoi cells on it. There are totally 3000 cells on the sphere. The red dots indicate locations of the point clouds on the sphere. The polygons with red edges are the Voronoi polygons generated with CVT algorithm.}\label{fig5_cell}
\end{figure}
For example III, we select a total of 3000 dataset points and generate the Voronoi cells on the unit sphere via the CVT approximation algorithm; see Fig. \ref{fig5_cell} The standard deviation for all the cell areas is $3.2\times 10^{-4}$, which means the nearly uniform distributions of data points on the sphere. The values at continental cells and the ocean cells are set to $0.9$ and $0.1$, respectively. The time step  $\Delta t$ is set to $0.05$ and the total number of linear iterations before the $(k+1)$-th thresholding adjustment is set to $2k$, $k=1, \cdots, N_t$, where $N_t=50$ is the times of the thresholding adjustments. The threshold $\xi_k$ is determined by bisection method such that \eqref{tm_bis} is satisfied. Here, the bisection domain limitation criterion is set to $10^{-6}$. The total number of iterations for the comparison simulation is set to be $10000$. 

Fig. \ref{fig6_error_c} shows the temporal variations of the relative root mean square errors for the numerical example in the first 1200 time steps. The error from the thresholding method generally have a descend trend although with some abrupt increase due to the thresholding adjustments. The error decreases to nearly zero (less than the machine accuracy) after the $30$th thresholding adjustment (a total of 960 time steps), which indicates the data is updated to the equilibrium. As a comparison, the error of the simulation via the linear method (red line in Fig. \ref{fig6_error_c}), which leads an exponential convergence rate, is smaller than that from thresholding method before the 960th time step (black circle in Fig. \ref{fig6_error_c}) and is larger after then. In order to further compare the efficiency of the two methods, we calculated the total time steps needed for the error to reach the criterions and listed them in Table \ref{table1}. The comparisons clearly reveal the efficiency of the thresholding adjustment in the application of the sharp dynamics, especially when the criterion is small. 
\begin{figure}
\includegraphics[scale=0.38]{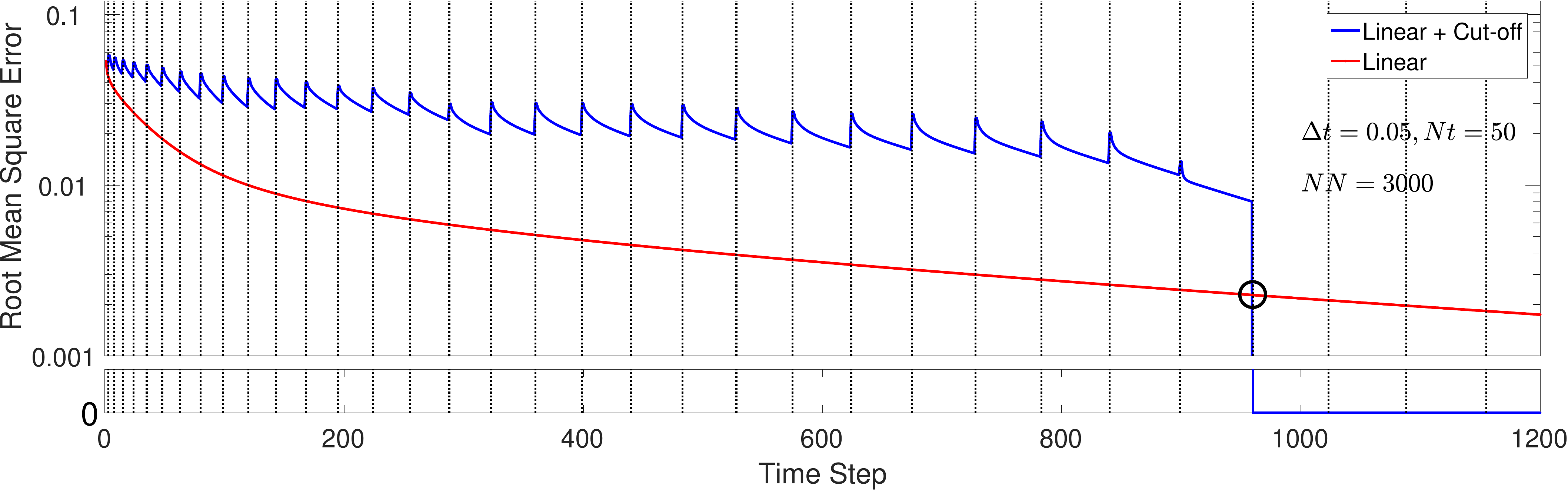} 
\caption{The semilog plot of temporal evolution of relative root mean square errors for the continental evolution process with thresholding for sharp dynamics with $\Delta t = 0.05$ and the total number of the thresholding adjustments is $N_t=50$. The linear iterations before the $(k+1)$th thresholding adjustment is  $2k$. The red line indicates the error of simulations with only the linear Fokker-Planck algorithm while the blue line is the error of the linear algorithm combined with the thresholding scheme. The black dotted lines indicate the time steps when the thresholding adjustments are applied.}\label{fig6_error_c}
\end{figure}
\begin{table}
\begin{tabular}{|c|c|c|c|c|}
\hline 
Criterion & $10^{-3}$ & $10^{-4}$ & $10^{-5}$ & $10^{-6}$ \\ 
\hline 
Steps (Linear) & 1737 & 4026 & 6304 & 8581 \\ 
\hline 
Steps (Threshold) & 960 & 960 & 960 & 960 \\ 
\hline 
\end{tabular}
\vspace{0.2cm}
\caption{Comparison of the needed steps to reach the criterions between linear method and threshold method.}\label{table1}
\end{table} 

	The continental evolution on the sphere after the $2$th, $5$th, $10$th, $30$th thresholding adjustment is illustrated and compared with the initial and equilibrium states in Fig. \ref{fig7_e_cut}. After several steps of thresholding adjustment, the sharp shapes of continentals quickly move from the initial Pangaea supercontinent towards the equilibrium state of current continentals. The distributions of continentals and oceans reaches the equilibrium state after the $28$th thresholding adjustment, exactly the same as the current distributions. Although the evolution of the continental movements is simulated with the data-driven model, some potential dynamics of continental drifting such as the Antarctic formation can be noticed in the evolutions, which may contribute to the detailed explanation of the continental drifting theory.
\begin{figure}
\includegraphics[scale=0.55]{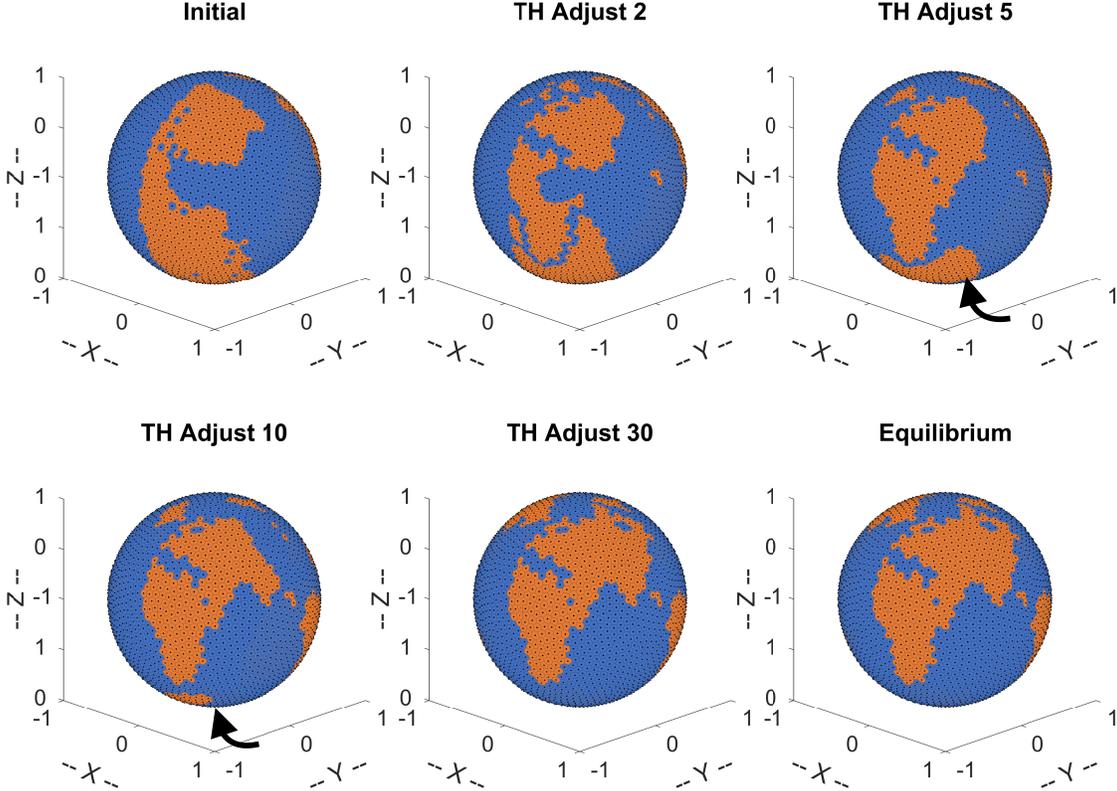} 
\caption{The evolutions of continental movements on the unit sphere with the parameter $\Delta t = 0.05$ and the total number of the thresholding adjustments is $N_t=50$. The continental evolution on the sphere after the 2th, 5th, 10th, 30th thresholding adjustment are illustrated and compared with the initial and equilibrium states. The black dots and polygons in each subplot illustrate the point clouds and the Voronoi cells, respectively. The orange and blue patch indicate the land and ocean, respectively. ‘TH’ is short for ‘thresholding step’. The formation of the Antarctic is revealed at the bottom (southern part) of the globe  (black arrow in TH 5). Note that the globes are shown in the same view angle so the Antarctic continental is out of view in the last two subplots.
}\label{fig7_e_cut}
\end{figure}

\section{Discussion}
We propose an efficient and universal equilibrium-driven deformation algorithm (EDDA) to simulate the inbetweening transformations given an initial and equilibrium. The algorithm automatically cooperates positivity, unconditional stability, mass conservation law, exponentially convergence and also  the manifold structure suggested by dataset.
Using EDDA, three challenging examples, (I) facial aging process, (II) COVID-19 invading/treatment process, and (III) continental evolution process are conducted efficiently. EDDA is shown to be a very efficient and universal method with enormous potential applications in other fields of science and  industry.

\section*{Acknowledge}
The authors would like to thank Prof. Haiyan Gao for  helpful suggestions. 
J.-G. Liu was supported in part by the National Science Foundation (NSF) under award DMS-1812573. G. Jin was supported in part by the the Natural Science Foundation of Guangdong Province under award 2019A1515011487 and the Fundamental Research Funds for the Central Universities under award 20184200031610059.

\bibliographystyle{alpha}
\bibliography{bibvis}

\end{document}